\documentclass{amsbook}

\usepackage{amsmath}
\usepackage[T1]{fontenc}
\usepackage{ae,aecompl}

\usepackage{amssymb,amscd}
\usepackage{url}
\usepackage{amsthm,amsfonts,amsxtra}
\usepackage{graphicx}
\usepackage{bbm}


\setlength{\textwidth}{125mm}  
\setlength{\headheight}{5mm}
\setlength{\headsep}{7mm}
\setlength{\textheight}{185mm}  
\setlength{\footskip}{0mm}
\setlength{\evensidemargin}{21mm}
\setlength{\oddsidemargin}{21mm}
\setlength{\hoffset}{0pt}
\setlength{\captionindent}{\parindent}
\setlength{\abovecaptionskip}{6pt}
\setlength{\belowcaptionskip}{6pt}

\numberwithin{equation}{section}
\numberwithin{table}{section}
\numberwithin{figure}{section}

\newtheoremstyle{bold}
{.5\baselineskip}{.5\baselineskip}{\itshape}{}{\bfseries}{.}{.5em}{}
\newtheoremstyle{shy}
{.5\baselineskip}{.5\baselineskip}{}{}{\bfseries}{.}{.5em}{}

\makeatletter
\def\@captionfont{\small}

\newenvironment{beweis}[1][\proofname]{\par
  \pushQED{\qed}%
  \normalfont \topsep6\p@\@plus6\p@\relax
  \trivlist
  \item[\hskip\labelsep
        \scshape
    #1\@addpunct{.}]\ignorespaces
}{%
  \popQED\endtrivlist\@endpefalse
}

\def\mychapter{%
  \if@openright\cleardoublepage\else\clearpage\fi
 \thispagestyle{empty}\global\@topnum\z@
  \@afterindenttrue \secdef\@mychapter\@schapter}

\def\@mychapter[#1]#2#3{\refstepcounter{chapter}%
  \ifnum\c@secnumdepth<\z@ \let\@secnumber\@empty
  \else \let\@secnumber\thechapter \fi
  \typeout{\chaptername\space\@secnumber}%
  \def\@toclevel{0}%
  \ifx\chaptername\appendixname \@tocwriteb\tocappendix{chapter}{#2\\ \scshape #3}%
  \else \@tocwriteb\tocchapter{chapter}{#2\\ \scshape #3}\fi
  \chaptermark{#1}%
  \addtocontents{lof}{\protect\addvspace{10\p@}}%
  \addtocontents{lot}{\protect\addvspace{10\p@}}%
  \@mymakechapterhead{#2}{#3}\@afterheading}

\def\@mymakechapterhead#1#2{\global\topskip 7.5pc\relax
  \begingroup
  \fontsize{\@xivpt}{18}\bfseries\centering
    \ifnum\c@secnumdepth>\m@ne
      \leavevmode \hskip-\leftskip
      \rlap{\vbox to\z@{\vss
          \centerline{\normalsize\mdseries
              \uppercase\@xp{ }}
          \vskip 3pc}}\hskip\leftskip\fi
     #1\par \vskip 1pc
     \Large\mdseries\scshape\centering
     #2\par \endgroup
  \skip@34\p@ \advance\skip@-\normalbaselineskip
  \vskip\skip@ }

\def\section{\@startsection{section}{1}%
  \z@{.9\linespacing\@plus\linespacing}{.5\linespacing}%
  {\large\bfseries\boldmath\centering}}

\def\subsection{\@startsection{subsection}{2}%
  \z@{.7\linespacing\@plus\linespacing}{.5\linespacing}%
  {\normalfont\scshape\centering}}

\def\theindex{\@restonecoltrue\if@twocolumn\@restonecolfalse\fi
  \columnseprule\z@ \columnsep 35\p@
  \@indextitlestyle
  \thispagestyle{empty}%
  \let\item\@idxitem
  \parindent\z@  \parskip\z@\@plus.3\p@\relax
  \raggedright
  \hyphenpenalty\@M
  \footnotesize}

\renewcommand{\@bibtitlestyle}{%
  \@xp\section\@xp*\@xp{\bibname}%
}
\renewcommand{\tocchapter}[3]{%
  \indentlabel{\@ifnotempty{#2}{\ignorespaces#1 #2.\quad}}#3}

\renewcommand{\tocsection}[3]{%
  \indentlabel{\@ifnotempty{#2}{\makebox[3.2em][l]{\ignorespaces#1 #2.}}}#3}

\makeatother

\renewcommand{\tocappendix}[3]{%
  \indentlabel{#1.\quad}#3}
\renewcommand{\tocappendix}[3]{%
  \indentlabel{\makebox[5.7em][l]{\ignorespaces#1.}}#3}

\renewcommand{\bibname}{References}

\renewcommand{\geq}{\geqslant}
\renewcommand{\le}{\leqslant}
\renewcommand{\leq}{\leqslant}

\theoremstyle{bold}
\newtheorem{theorem}{Theorem}[section]
\newtheorem{proposition}[theorem]{Proposition}

\theoremstyle{shy}




\newcommand{\cM}{\mathcal{M}}

\newcommand{\cR}{\mathcal{R}}



\newcommand{\one}{\mathbbm{1}}

\newcommand{\NN}{\mathbb{N}}

\newcommand{\RR}{\mathbb{R}}


\newcommand{\dd}{\ts\mathrm{d}\ts}
\newcommand{\ee}{\ts\mathrm{e}\ts}



\newcommand{\ts}{\hspace{0.5pt}}



\usepackage{xcolor}

\makeindex

\begin{document}





\mainmatter
\mychapter{Multiple-merger genealogies --- models, consequences, inference}{Fabian~Freund}\label{chap:FF}

Trees corresponding to $\Lambda$- and $\Xi$-$n$-coalescents can be both
 quite similar and fundamentally different compared to bifurcating
tree models based on Kingman's $n$-coalescent. This has consequences
for inference of a well-fitting gene genealogy as well as for
assessing biological properties of species having such sample
genealogies.
Here, mathematical properties concerning clade sizes in the tree as
well as changes of the tree when the samples are enlargened are
highlighted. To be used as realistic genealogy models for real
populations, an extension for changing population sizes is discussed.\footnote{This is a review of my project within the DFG Priority Programme 1590 "Probabilistic Structures in Evolution". It will appear in "Probabilistic Structures in Evolution", ed. by E. Baake and A. Wakolbinger, EMS Publishing House, Zurich.}

\section[Multiple-merger coalescents]{Multiple-merger
  coalescents}\label{FF-sect:intro}
Two decades ago, $\Lambda$- and shortly later the bigger class of
$\Xi$-$n$-coalescents \index{coalescent!Xi@$\Xi$-$n$-}
\index{coalescent!Lambda@$\Lambda$-$n$-} have been introduced and become an
active area of mathematical research ever since. These are Markov
processes $\Pi^{(n)}=(\Pi^{(n)}_t)_{t\geq 0}$ on the set of partitions
of $\{1,\ldots,n\}$. Transitions are mergers of partition blocks. A
$\Xi$-$n$-coalescent's rates are characterised by a finite measure
$\Xi$ with positive mass on the simplex
$$
\Delta=\{(x_i)_{i\in \NN}:x_1\geq x_2\geq \ldots, \sum_{i\in\NN}
x_i\leq 1\}.
$$
An intuitive way of describing the transitions is via a
Poisson construction, see \cite{FF-Pitman1999, FF-Schweinsberg2000}\index{paintbox construction}: Transitions
are possible at times $t\in[0,\infty)$ that feature a point $(t,x)$ of
a Poisson point process\index{Poisson!point process}\index{process!Poisson!point} on $[0,\infty)\times \Delta$ with intensity
$\dd t\otimes \frac{\Xi^*(\dd x)}{(x,x)}$, where $\Xi^*$ is
the restriction of $\Xi$ to $\Delta\setminus\{(0,0,\ldots)\}$ and
$(x,x):=\sum_{i\in\NN} x^2_i$.  At
time $t$, consider the 'paintbox' $x=(x_1,x_2,\ldots)$: Each partition
block $i$ present at $t$ draws independently a colour $C_i$, which
paints it with colour $j\in\mathbb{N}$ with $P(C_i=j)=x_j$ and makes it
colourless with $P(C_i=0)=1-\sum_{m\in\NN}x_m$.  Merge each set of
blocks of the same colour. If $\Xi$ has positive mass on the origin,
say $a=\Xi(\{(0,0,\ldots)\})$, also allow additional transitions for
each pair of blocks present with transition rate $a$.

Such coalescents allow transitions that are mergers of sets of any
number of blocks present into multiple new blocks. If $\Xi$ has only
positive mass on the first coordinate of the simplex, only a single
set of blocks can be merged into one new block as a transition. This
subclass of $\Xi$-$n$-coalescents are called
$\Lambda$-$n$-coalescents, $\Lambda$ being the restriction of $\Xi$ to 
the first dimension of $\Delta$, a finite measure on $[0,1]$.

For any $\Xi$, $\Pi^{(n)}$ almost surely reaches its absorbing state
$\{1,\ldots,n\}$ in finite time. For any fixed time $t$, the partition
$\Pi^{(n)}_t$ of the $\Xi$-$n$-coalescent is exchangeable,
i.e. $\sigma(\Pi^{(n)}_t)\stackrel{d}{=}\Pi^{(n)}_t$ for any
permutation $\sigma$ of $\{1,\ldots,n\}$.  This extends to certain
non-deterministic times, for instance the partition before absorption
in $\{\{1,\ldots,n\}\}$ is exchangeable. Moreover, there exists a
Kolmogorov extension $(\Pi_t)_{t\geq 0}$ on the set of partitions of
$\NN$ so that its restriction to $[n]:=\{1,\ldots,n\}$ is a
$\Xi$-$n$-coalescent for any $n\in\NN$, see \cite[Theorem
2]{FF-Schweinsberg2000}. $\Pi=(\Pi_t)_{t\geq 0}$ is called a
$\Xi$-coalescent.  Exchangeability extends to the $\Xi$-coalescent in
the sense that any permutation of a finite set $S\subset \NN$ does not
change the distribution of $\Pi$.  A consequence of this is that the
limit frequencies $\lim_{n\to\infty} B^{(n)}_i(t)/n$ of the
partition block sizes $B^{(n)}_i(t)$ of the restrictions of $\Pi_t$ to
$[n]$ (e.g. ordered by sizes) exist almost surely, this is called
Kingman's correspondence (which is essentially de Finetti's theorem,
see e.g.
\cite[Appendix]{FF-Schweinsberg2000}). 
$\Xi$-$n$-coalescents and related models also appear as central
objects in other chapters, see the contributions of Birkner
and Blath~\cite{FF-MBJB20}, of Kersting and Wakolbinger~\cite{FF-GKAW20}
as well as of Sturm~\cite{FF-AS20} in this volume.

The Poisson construction also shows fundamental differences between
different choices of $\Xi$ (respectively $\Lambda$). While choosing
$\Lambda=\delta_0$ leads to Kingman's $n$-coalescent, which only allows
binary mergers, any $\Xi$ with mass outside of $(0,0,\ldots)$ allows
for mergers of more than two blocks (which are the name-giving
multiple mergers).  If $\frac{\Xi(\dd x)}{(x,x)}$ is a
finite measure, the Poisson point process almost surely only has
finitely many points on any set $[0,t]\times\Delta$. For such $\Xi$,
the corresponding subclass of coalescent processes are called simple
$\Xi$-($n$-)coalescents.  If
$\sum_{i\in\NN}x_i\frac{\Xi(\dd x)}{(x,x)}$ 
is a finite
measure, at least the number of mergers the block containing a fixed
$i\in\mathbb{N}$ can participate in is almost surely
finite. Corresponding $\Xi$(-$n$)-coalescents are called coalescents
with dust.\index{dust} All simple $\Xi$(-$n$)-coalescents are also coalescents
with dust.  A further property of $\Xi$-coalescents is whether they
stay infinite, i.e. which have infinitely many blocks almost surely at
any time, or whether they come down from infinity,\index{coming down from infinity} i.e. they have
finitely many blocks at any time $t>0$ almost surely. All
$\Xi$-coalescents with dust that do not allow for Poisson points where
all blocks are coloured with finitely many colours, i.e. that have
$\Xi(\{x\in\Delta:\sum_{i=1}^k x_i=1 \mbox{ for a }k\in\NN\})=0$ stay
infinite, see e.g. \cite[Sect. 3]{FF-freund2017size}. The case of
$\Xi$-coalescents without dust is more complex, see
e.g.~\cite{FF-herriger2012conditions}.  We will need later that the times
to absorption of $\Xi$-$n$-coalescents not staying infinite are
converging almost surely to a finite limiting variable for
$n\to\infty$: this follows from \cite[Lemma 31]{FF-Schweinsberg2000}
if the coalescent comes down from infinity. Otherwise, the same lemma shows
that $\Xi$ has positive mass on Poisson points that colour all present
blocks with finitely many colours. Thus, there is a finite waiting time
for such points, which then forces the number of blocks to be finite
and subsequently these will merge into absorption after a finite time
almost surely.

One can even put this stronger for both cases: Since only finitely
many blocks merge, exchangeability ensures that given $n$ large enough,
all the blocks of the $\Xi$-coalescent merging at the last collision
have at least one individual in the blocks of the
$\Xi$-$n$-coalescent. Thus, the height of the $\Xi$-$n$-coalescent is
the same as for the $\Xi$-coalescent above a certain (path-dependent)
$n_0$, if the $\Xi$-coalescent does not stay infinite.

Any $\Xi$-$n$-coalescent also encodes a random ultrametric\index{ultrametric}
(labelled) tree\index{tree!ultrametric} with $n$ leaves, where
ultrametric means that the path lengths
from leaves to root are identical for all
leaves.
We build this tree from leaves to root: Start with $n$ edges (initially
length 0) at the leaves corresponding to the blocks
$\{1\},\ldots,\{n\}$ at time 0. Elongate all edges by $l_1$, where
$l_1$ is the waiting time for the first transition of
$\Pi^{(n)}$. Then, the edges corresponding to the sets of merged
partition blocks of $\Pi^{(n)}$ at this transition are joined in new
internal nodes (for $\Xi$-$n$-coalescents, a transition may consist of
multiple simultaneous mergers).
Start a new edge at each newly introduced node (length 0) and elongate
again all branches not yet connected to two nodes for $l_2$, the
waiting time until the next transition of the $n$-coalescent. The new
edges represent the blocks formed at the transition at time
$l_1$. Then, join all edges (not yet connected to two nodes) in a new
node that correspond to the set(s) of blocks merged at the second
transition and start again new edges (length 0) from these nodes
(representing the newly formed blocks at the second
transition). Repeat this until all blocks are merged, which
corresponds to the root of the tree.

Kingman's $n$-coalescent has been introduced by Kingman
\cite{FF-Kingman1982a,FF-Kingman1982b} as approximating the
genealogical tree of a sample of $n$ individuals of a haploid
Wright--Fisher model with (large) population size $N$.  Here, the
discrete genealogy is defined by recording the partitions of
$\{1,\ldots n\}$ corresponding to which sets of individuals in
$\{1,\ldots n\}$ have the same ancestor $r$ generations backwards from
the time of sampling for all $r\in\NN$. This partition-valued process
$(\cR^{(N)}_r)_{r\in\NN}$ then converges for $N\to\infty$ when time is
rescaled, i.e.
\begin{equation}\label{FF-eq:conv2coal}
\big(\cR^{(N)}_{\lceil c^{-1}_Nt\rceil}\big)^{}_{t\geq 0}\stackrel{d}{\to} \Pi^{(n)},
\end{equation}
where $\Pi^{(n)}$ is Kingman's $n$-coalescent $\Pi^{(n),\delta_0}$ and
$c_N$ is the probability that two individuals drawn at random from the
same generation have the same parent in the reproduction model, so
$c_N=N^{-1}$ in the Wright--Fisher model.\index{Wright--Fisher!model}
For any $\Xi$-$n$-coalescent
$\Pi^{(n),\Xi}$, there exists a series of Cannings models,\index{Cannings!model}
i.e. haploid reproduction models of a population of fixed size $N$ in
any generation with exchangeable offspring numbers (i.i.d. across
generations), so that Eq.~\eqref{FF-eq:conv2coal} holds for
$\Pi^{(n)}=\Pi^{(n),\Xi}$, see \cite{FF-Moehle2001}.

While the Wright--Fisher model is a staple model in population
genetics, the standard model of a neutrally evolving fixed-size
population, the Cannings models leading to other $\Xi$-$n$-coalescents
are used more rarely. This is inherited by the coalescent process
limits. The prevalence of Kingman's $n$-coalescent also stems from its
robustness: If a series of Cannings models with $c_N\to 0$ and
non-extreme variance in offspring numbers across individuals satisfies
Eq.~\eqref{FF-eq:conv2coal} for some limit process $\Pi^{(n)}$, then
$\Pi^{(n)}$ is Kingman's $n$-coalescent, see
\cite{FF-Moehle2000}. Such models include the Moran model, where one
random individual has two, a different one no and all others one
offspring.

However, there is growing evidence that for certain populations,
$\Xi$-$n$-coalescents are fitting better as genealogy
models. Necessarily, as discussed above, these need to be populations
where ancestors are shared by many offspring. For instance, a
biological mechanism leading there is sweepstake
reproduction\index{sweepstake reproduction}, i.e. one ``lucky''
individual/genotype produces considerably more offspring on the
coalescent time scale than the rest of the (potential) parents
\cite{FF-Hedgecock2011}. An example would be type III survivorship in
marine species, where individuals reproduce with large offspring
numbers, but high early-life mortality keeps the population size
constant.  This has been modelled in \cite{FF-Schweinsberg2003} as
sampling $N$ actual offspring from the potential offspring (in reality
offspring dying in an early life stage) of $N$ parents given by
i.i.d. $(\nu^{(N)}_1,\ldots,\nu^{(N)}_N)$ with heavy tails,
i.e. $P(\nu_1\geq k)\sim Ck^{-\alpha}$.  If $\alpha\in[1,2)$, the
limit in Eq.~\ref{FF-eq:conv2coal} from this model is a
Beta-$n$-coalescent\index{coalescent!beta!$n$-}, i.e. a
$\Lambda$-$n$-coalescent with $\Lambda = Beta(2 - \alpha,\alpha )$ being a
Beta distribution. These coalescents have no dust, and come down from
infinity for $\alpha>1$ but stay infinite for $\alpha=1$. While the
Beta-$n$-coalescents model haploid sweepstake reproduction, diploid
reproduction leads to a $\Xi$-$n$-coalescent limit, the
Beta-$\Xi$-$n$-coalescent, see \cite{FF-Blath201636,FF-birkner2018coalescent}.

In \cite{FF-Eldon2006}, a different haploid model of sweepstake
reproduction in a population of fixed size $N$ was introduced: In a
Moran model,\index{Moran!model} with small probability
$N^{-\gamma}$, $\gamma> 0$, the
single individual having two offspring instead has
$U=\lceil N\Psi\rceil$ offspring (while $U-1$ instead of one parent
have no offspring) for $\Psi\in(0,1)$. If $\gamma\leq 2$, the limit in
Eq.~\eqref{FF-eq:conv2coal} is a $\Lambda$-$n$-coalescent with
$\Lambda=\frac{2}{2+\Psi^2}\delta_0+\frac{\Psi^2}{2+\Psi^2}\delta_{\Psi}$
if $\gamma=2$ and $\Lambda=\delta_{\Psi}$ otherwise. The latter class is
called Psi- or Dirac-$n$-coalescents\index{coalescent!Dirac}.

For the class of Beta-$n$-coalescents\index{coalescent!beta!$n$-},
recent studies showed evidence that for samples from Japanese sardines
and Atlantic cod, Beta-$n$-coalescents (or their diploid
$\Xi$-$n$-coalescent counterparts) are fitting models, see
\cite{FF-Steinrue,FF-niwa2016reproductive,FF-Arnason2014,FF-Blath201636}.
This needs a link of
the usually unobserved genealogy to the observed genetic data.  Most
samples only include information about leaves of the genealogy at time
$t=0$. The link is usually given by modelling the differences in the
DNA sequences of the sample at a region in the genome by tracing back
the genealogy and the mutations upon it. Mutations on a branch are
inherited by every leaf subtended by the branch and are interpreted
under the infinite-sites model,\index{infinite-sites!model}
i.e. each mutation hits another position of the sequence. The mutation
process is given by a homogeneous Poisson point
process\index{Poisson!point process}\index{process!Poisson!point}
with rate $\frac{\theta}{2}$ on the
branches of the coalescent tree, independent of this tree.

Another biological mechanism that may lead to multiple-merger
genealogies is selection. In models where reproductive success of
individuals depend on their fitness and in order to survive and
produce offspring with good survival chances, they need to stay close
enough to an ever increasing fitness ``threshold'' (which may depend on
the fitness of other individuals), the Bolthausen--Sznitman
$n$-coalescent\index{coalescent!Bolthausen--Sznitman!$n$-}, a
$\Lambda$-$n$-coalescent with $\Lambda$ being the uniform distribution
on $[0,1]$, emerges as a suitable limit genealogy model, see e.g.
\cite{FF-Neher2013,FF-Brunet2013,FF-berestycki2013,FF-Desai2013,FF-Schweinsberg2017}.  The latter two
sources consider a model where the population gets fitter by pooling
beneficial mutations of equal additive fitness gains, the genealogy
limit there is a Bolthausen--Sznitman $n$-coalescent with its external
branches elongated by a deterministic interval.  Such types of
selection are summarised under the term 'rapid selection':\index{selection!rapid} if an
individual by chance has a very large fitness advantage over the rest
of the population, its number of descendants is boosted long enough so
that a multiple merger can appear, while over time this fitness
advantage is erased (so the $n$-coalescent stays exchangeable).  The
Bolthausen--Sznitman $n$-coalescent is also a Beta-$n$-coalescent if
one chooses $\alpha=1$.  Moreover, it can be constructed from a random
recursive tree\index{tree!recursive} with $n$ nodes and its properties
can thus be linked to the Chinese restaurant
process,\index{Chinese restaurant process}
\index{process!Chinese restaurant} see
\cite{FF-goldschmidt2005random}.  For instance, the partition blocks
added to the partition block including 1, starting in $\{1\}$ and
eventually reaching $[n]$, can be seen as merging each of the $K$
cycles of a random permutation\index{permutation} of $\{2,\ldots,n\}$ (a Chinese
restaurant process with $n-1$ customers) with the block containing 1
at i.i.d. $\mathrm{Exp}(1)$ times.

For the Bolthausen--Sznitman $n$-coalescent, some genomic data sets
from populations likely under strong selection have been argued to
show patterns explained by the Bolthausen--Sznitman $n$-coalescent
\cite{FF-nourmohammad2018clonal,FF-Roedelsperger2014},
although no strict model testing as for the genealogy models for
marine species listed above has been performed.

These are the two main scenarios where $\Xi$-$n$-coalescents are
reasonable genealogy models. Other $\Xi$-$n$-coalescents also appear
as genealogies in several additional contexts, see e.g. the reviews
\cite{FF-Tellier2014} and \cite{FF-irwin2016importance}.

This all shows that $\Lambda$- and $\Xi$-$n$-coalescents are a
mathematically diverse class of Markov processes with a range of
(potential) applications, but with sparse evidence so far for which
range of species/populations they should be used as the standard
sample genealogy model. Since Kingman's $n$-coalescent has been the
standard model for genealogies, with extensions e.g. for including
population size changes and population structure, many population
genetic predictions about populations are based on properties of
Kingman's $n$-coalescent. Thus, if the genealogy is given by a
different $\Xi$-$n$-coalescent, these properties may change
profoundly, with consequences for interpreting and handling the
genetic diversity of such populations.

\section{Modelling multiple mergers for variable population size
}\label{FF-sec:MMCpopsized}

The pre-limiting Cannings models whose genealogies converge to
$\Lambda$- and $\Xi$-$n$-co\-a\-les\-cents are models of a sample taken from
\emph{one generation} in \emph{one population} of \emph{fixed size
  $N$} (or $2N$ in the diploid case) across time. Moreover, the
mutational model produces \emph{linked mutations}, i.e. the genealogy
stays the same for any part of the genomic region modelled. Any of
these assumptions can (and often will) be violated for real
populations. However, a series of extensions both for haploid and
diploid models are available, some also developed within this Priority
Programme, see the chapters by Sturm~\cite{FF-AS20}, Birkner and
Blath~\cite{FF-MBJB20}, and Kurt and Blath~\cite{FF-JBNK20} in this volume. 
For $\Lambda$-$n$-coalescents, recombination has been added in
\cite{FF-birkner2013ancestral}. An approach for accounting for serial
sampling, i.e. sampling in different generations has been proposed in
\cite{FF-Hoscheit356097}. A general model for genealogies in diploid
populations and any combination of standard positive selection,
population structure through demes connected by migration, population
size changes and recombination has been introduced in
\cite{FF-koskela2018robust}.  Their model extends the model for
genealogies of diploid individuals when offspring distributions are
skewed from \cite{FF-birkner2018coalescent}, which satisfies
Eq.~\eqref{FF-eq:conv2coal} with $\Pi^{(n)}$ being the
Beta-$\Xi$-$n$-coalescent. Our focus here lies on modelling
population size changes for $\Lambda$-$n$-coalescents as limits of
haploid Cannings models. For Kingman's $n$-coalescent
$(\Pi^{(n),\delta_0}_t)_{t\geq 0}$, constructed as the limit of
genealogies in the Wright--Fisher model, population size changes of
order $N$ in the Wright--Fisher model lead to a time-changed
coalescent limit, see \cite{FF-griffiths1994sampling, FF-Kaj2003}.
In more detail, assume the size $N_r$ of the
population $r$ generations before sampling in a Wright--Fisher model
($r\in\NN_0$) is deterministic and can be characterised, for
$N=N_0\to\infty$, by an existing positive limit
\begin{equation}\label{FF-eq:coaltimeNchange}
\nu(t)=\lim_{N\to\infty}\frac{N^{}_{\lceil tc_N^{-1}\rceil}}{N}>0
\end{equation}
for all $t\geq 0$. The limit of the discrete coalescents of $[n]$,
rescaled by $c_N=N^{-1}$ from the fixed population size model, is then
\begin{equation}\label{FF-eq:timechanged_coal}
\big(\cR^{(N)}_{[ c^{-1}_Nt]}\big)^{}_{t\geq 0}\xrightarrow{\ d\ }
  \big(\Pi^{}_{g(t)}\big)^{}_{t\geq 0} 
\end{equation}
(in the Skorohod-sense) for $g(t)=\int_0^t \nu(s)^{-1} \dd s$ and
$(\Pi_t)_{t\geq 0}=(\Pi^{(n),\delta_0}_t)_{t\geq 0}$.  For instance,
consider the specific case
\begin{equation}\label{FF-eq:expgrowth}
  N^{}_r=\lfloor N(1-c_N\rho)^r\rfloor \mbox{ for } r\in\NN^{}_0,
  \rho\geq 0 \Rightarrow \nu(t)=\ee^{-\rho t},
\end{equation}
i.e. on the timescale of the coalescent limit we have exponential
growth of the population size with rate $\rho$. For this scenario, we
call the limit process Kingman's $n$-coalescent with exponential
growth. Additionally, for the Wright--Fisher model with non-constant
population sizes, the genealogical relationship is clearly
established: it is still defined as `offspring chooses parent at
random from parent generation'.

Similarly time-changed $\Lambda$- and $\Xi$-$n$-coalescents have been
proposed as reasonable genealogy models for populations with skewed
offspring distribution and fluctuating population sizes, e.g. in
\cite{FF-Spence2016}.  However, an explicit construction as a limit of
Cannings models with fluctuating population sizes has only been shown
for the special case of Dirac $n$-coalescents with exponential growth on
the coalescent time scale, i.e. population size changes given by
\eqref{FF-eq:expgrowth} in the underlying Cannings models.  To present
an explicit construction in general (for more multiple-merger
$n$-coalescents and more general population size changes) is not
(always) as straight forward as for Wright--Fisher models. The
convergence itself is essentially covered by the machinery from
\cite{FF-mohle2002coalescent}, but some care is needed for defining
the genealogical relationship.  The idea from
\cite{FF-Matuszewski2017} is to use a specific class of Cannings
models, the modified Moran models, where the genealogical relationship
can be easily established and the convergence can be shown by
verifying the conditions from \cite{FF-mohle2002coalescent}.
\index{Moran!model!modified} The modified Moran model is defined as
follows from parent to offspring generation. In the parent generation,
one individual is picked at random that has $U\geq 2$ offspring.  The
standard Moran model fixes $U=2$, where the modified Moran model sets
this as a random variable. $U-1$ randomly chosen other individuals
from the parent generation have no offspring, while the remaining
parent individuals have exactly one offspring. For fixed population
size, in order that there can be a continuous time coalescent limit
for the discrete genealogies, one needs $c_N\to 0$ for
$N\to\infty$. For modified Moran models, this is equivalent to
$N^{-1}U_N\xrightarrow{\ d \ }0$ for $N\to\infty$,
see~\cite{FF-huillet2013extended}.

An example of such a modified Moran model was shown in the first
section, the sweepstake reproduction model from
\cite{FF-Eldon2006}. Based on this model, \cite{FF-Matuszewski2017}
added exponential growth as in Eq. \eqref{FF-eq:expgrowth}, by having
one individual $r$ generations back from sampling have
$U_{N,r}=\max\{N_{r-1}-N_r,2\one_{R_r> N_r^{-\gamma}}+\lceil
N_r\Psi\rceil \one_{R_r\leq N_r^{-\gamma}}\}$ individuals as offspring
for $0<\gamma<2$, $\Psi\in(0,1)$, $(R_r)_{r\in\NN}$ i.i.d. uniform on
$[0,1]$. In the same parent generation, a random set of $U_{N,r}-1$
other individuals have no offspring, while all others have exactly one
offspring each. Then, as proven in \cite{FF-Matuszewski2017},
\cite[Thm. 2.2]{FF-mohle2002coalescent} ensures that
\eqref{FF-eq:timechanged_coal} holds for a time-changed Dirac
coalescent $(\Pi^{(n),\delta_p}_{g(t)})_{t\geq 0}$ as limit, but for
$g(t)=\int_0^t \exp(-\rho s)^{-\gamma} \dd s$.  This dependence of the
timescale on the specific way the Cannings model is defined is
noteworthy, since for fixed population size across generations, the
coalescent limit of the discrete genealogies is not changed by the
specific choice of $\gamma$. This may also prove problematic for
parameter inference, since inferring the growth parameter $\rho$
requires knowledge of $\gamma$.  This nullifies one strength of many
coalescent approaches, that the specificities of the pre-limit models
do not change the coalescent limit. However, at least calibrating the
per-generation mutation rate $\mu_N$ in the discrete Cannings model, 
which leads to a mutation rate
$\theta/2=\lim_{n\to\infty}c_N^{-1}\mu_N$ in the coalescent limit, does
not depend on the fluctuation of the population sizes, but just on the
Cannings models.

In \cite{FF-freund2019cannings}, I could show that this approach can
be generalised to allow much more general modified Moran models and
other Cannings models with fluctuating population sizes whose
genealogies converge, after rescaling by $c_N=c_{N_0}$, to a
timescaled $\Lambda$-$n$-coalescent, as in
Eq.~\eqref{FF-eq:timechanged_coal}\index{coalescent!Lambda@$\Lambda$-$n$-!with time change}. Essentially, this boils down to verifying that
\cite[Thm. 2.2]{FF-mohle2002coalescent} can be applied.  For any
$\Lambda$, one can use the modified Moran models constructed in
\cite[Prop.~3.4]{FF-huillet2013extended}, whose genealogies for
constant population size across generations converge to the
$\Lambda$-$n$-coalescent with the usual rescaling of time.
 These are defined
either by distributing $U_N$ as the size of the block being produced
at the first merger of a $\Lambda$-$N$-coalescent (variant $A$) or by
using $U_N 1_{B}+2(1-1_{B})$ instead, where $1_{B}$ is independent of
$U_N$ with $P(B)=N^{-\gamma}$ for $\gamma\in(1,2)$ (variant $B$). For
changing population sizes, it now needs to be defined how the
genealogical relation between parents and offspring is within this
model.  Let $U_{N_r}$ be the offspring in generation $r$ for the
modified Moran model with constant size $N_r$. If there is a reduction
in size from $N_r$ to $N_{r-1}$ from one generation to the next, one
can just sample down from the $N_r$ offspring that the fixed size model
produces, resulting again in a modified Moran model.  For increases in
population size, the model stays a modified Moran model if additional
individuals are added as further offspring of the parent having
$U_{N_r}$ offspring or as single offspring of parents not reproducing
in the fixed size model (since $U_N\geq 2$, at least one individual
can be added with the second method).  Then, for any $\Lambda$ and for
any time change function $\nu$ as in Eq.~\eqref{FF-eq:coaltimeNchange}
there exist such modified Moran models with population sizes
$(N_r)_{r\geq 0}$ satisfying Eq.~\eqref{FF-eq:coaltimeNchange} so that
the discrete genealogies, properly scaled with a time-inhomogeneous
function (related to $c_N$, but not just scaling by $c_N$), converge
to a $\Lambda$-$n$-coalescent limit. Under mild additional assumptions
this is equivalent to the convergence as described in
Eq.~\eqref{FF-eq:timechanged_coal} with a different time change $g$;
for a certain class of $\nu$ and $\Lambda$ one needs no additional
assumptions at all. See the following two sample results from
\cite{FF-freund2019cannings} (Theorem~1 and Corollary~1
therein). The first one includes the case of modified Moran models
converging to a time-changed Dirac coalescent from
\cite{FF-Matuszewski2017}. In both propositions, additional
individuals can be added in any way so that the resulting model is
still a modified Moran model.
\begin{proposition}
  Let $U_N$ be distributed as the first jump of a
  $\Lambda$-$N$-coalescent. If
  $(N-1)^{-1}E(U_N(U_N-1))\nrightarrow 0 \mbox{ for } N\to\infty$,
  define the modified Moran model via variant $B$. Then, for any
  positive function $\nu$ there exist population sizes satisfying
  Eq. \eqref{FF-eq:coaltimeNchange} so that the discrete genealogies
  of the modified Moran model with variable population sizes converge
  as in Eq. \eqref{FF-eq:timechanged_coal} with
  $g(t)=\int^t_0 (\nu(s))^{-\gamma} \dd s$ and $(\Pi_t)_{t\geq 0}$ is a
  $\Lambda$-$n$-coalescent.
\end{proposition}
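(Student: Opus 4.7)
The plan is to apply M\"ohle's Theorem~2.2 from \cite{FF-mohle2002coalescent}, which gives Skorohod convergence of a time-inhomogeneous sequence of haploid Cannings genealogies to a time-changed exchangeable coalescent once one controls the per-generation pair coalescence probabilities $c_{N_r}$ and ensures negligibility of contributions that would otherwise produce a proper $\Xi$-limit. The fixed-size input is \cite[Prop.~3.4]{FF-huillet2013extended}: the variant~$B$ modified Moran model with $U_N$ distributed as the first jump of a $\Lambda$-$N$-coalescent produces discrete genealogies that, under the appropriate time rescaling, converge to the $\Lambda$-$n$-coalescent.

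The next step is to identify the scaling of $c_N$ under variant~$B$. Writing $U_N^B=U_N 1_B+2(1-1_B)$ with $P(B)=N^{-\gamma}$ and $1_B$ independent of $U_N$, one computes
\begin{equation*}
c_N=\frac{E[U_N^B(U_N^B-1)]}{N(N-1)}=\frac{N^{-\gamma}E[U_N(U_N-1)]+2(1-N^{-\gamma})}{N(N-1)}.
\end{equation*}
The hypothesis $(N-1)^{-1}E[U_N(U_N-1)]\nrightarrow 0$, together with the choice $\gamma\in(1,2)$ in the definition of variant~$B$, is precisely what forces the big-merger contribution in the numerator to dominate the ordinary Moran contribution for large $N$, yielding the power-law behaviour $c_N\sim \kappa N^{-\gamma}$ for some positive constant $\kappa$ depending on $\Lambda$ and $\gamma$.

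Given a positive function $\nu$, I would then construct $(N_r)_{r\ge 0}$ by a discretisation such as $N_r=\lfloor N_0\,\nu(rc_{N_0})\rfloor$ so that Eq.~\eqref{FF-eq:coaltimeNchange} holds. The variable-size modified Moran model is built generation by generation according to the recipe described before the proposition: simulate the fixed-size variant~$B$ step for $N_r$ offspring, then either sub-sample down to $N_{r-1}$ at contractions, or, at expansions, add individuals as further offspring of the $U_N$-parent or as single offspring of otherwise non-reproducing parents; because $U_N\ge 2$ these operations always preserve the modified Moran structure. From the power-law scaling $c_N\sim\kappa N^{-\gamma}$ one obtains $c_{N_r}/c_{N_0}\to \nu(t)^{-\gamma}$ for $r=\lceil tc_{N_0}^{-1}\rceil$, so that a Riemann-sum argument gives
\begin{equation*}
\sum_{r=1}^{\lceil tc_{N_0}^{-1}\rceil} c_{N_r}\,c_{N_0}^{-1}\;\longrightarrow\;\int_0^t \nu(s)^{-\gamma}\dd s=g(t),
\end{equation*}
identifying the time change appearing in the statement.

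The principal obstacle is verifying the full set of hypotheses of \cite[Thm.~2.2]{FF-mohle2002coalescent} along the varying sequence $(N_r)$: one must show that the single-generation transition rate matrix on partitions of $[n]$, rescaled by $c_{N_r}^{-1}$, converges to the infinitesimal generator of the $\Lambda$-$n$-coalescent locally uniformly in time, and that the contribution of generations producing several disjoint offspring-blocks from distinct parents, which would otherwise yield a proper $\Xi$-limit, is asymptotically negligible. For constant population size both statements are contained in \cite[Prop.~3.4]{FF-huillet2013extended}; their extension to the variable-size setting relies on $\nu$ being bounded above and below on compact time intervals, so that $N_r\to\infty$ uniformly on compact time windows, enabling the dominated-convergence estimates from the fixed-size proof to transfer. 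Combined with the time-change computation above, \cite[Thm.~2.2]{FF-mohle2002coalescent} then delivers the claimed Skorohod convergence to $(\Pi_{g(t)})_{t\ge 0}$.
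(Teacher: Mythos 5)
The paper does not actually prove this proposition: it is imported verbatim from \cite{FF-freund2019cannings} (Theorem~1 there), and the surrounding text only sketches the strategy --- verify the hypotheses of \cite[Thm.~2.2]{FF-mohle2002coalescent} for the variant-$B$ modified Moran models of \cite[Prop.~3.4]{FF-huillet2013extended}, with the variable-size model obtained by down-sampling at contractions and by adding single offspring of otherwise non-reproducing parents at expansions. Your proposal follows exactly this route, and your formula $c_N=\bigl(N^{-\gamma}E[U_N(U_N-1)]+2(1-N^{-\gamma})\bigr)/\bigl(N(N-1)\bigr)$ is correct. The genuinely technical part (checking M\"ohle's conditions along the varying sequence $(N_r)$) you defer to the fixed-size proof plus local boundedness of $\nu$, which is also precisely where the paper defers to the reference, so no comparison is possible there.

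There is, however, one concrete gap. You assert that the hypothesis $(N-1)^{-1}E[U_N(U_N-1)]\nrightarrow 0$ together with $\gamma\in(1,2)$ ``is precisely what forces the big-merger contribution in the numerator to dominate'', giving $c_N\sim\kappa N^{-\gamma}$; the identification of $g(t)=\int_0^t\nu(s)^{-\gamma}\dd s$ rests entirely on this. The implication does not hold. Since $U_N$ is the first jump size of the $\Lambda$-$N$-coalescent, the binomial theorem gives $E[U_N(U_N-1)]=N(N-1)\Lambda([0,1])/\lambda_N$, where $\lambda_N=\sum_{k=2}^N\binom{N}{k}\lambda_{N,k}$ is the total jump rate from $N$ blocks; the stated hypothesis therefore only says that $\lambda_N$ does not grow superlinearly, whereas domination of the big-merger term, $N^{-\gamma}E[U_N(U_N-1)]\gg 2$, requires the much stronger $\lambda_N=o(N^{2-\gamma})$ with $2-\gamma<1$. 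For the Bolthausen--Sznitman coalescent one has $\lambda_N=N-1$, hence $E[U_N(U_N-1)]=N$: the hypothesis holds ($N/(N-1)\to 1$), yet $N^{-\gamma}E[U_N(U_N-1)]=N^{1-\gamma}\to 0$ is swamped by the Moran term, so $c_N\sim 2N^{-2}$, every $k$-merger rate with $k\geq 3$ is $O(N^{1-\gamma})$ relative to $c_N$, and variant~$B$ converges to Kingman's coalescent --- not to the intended $\Lambda$-coalescent, and not with time change $\int_0^t\nu(s)^{-\gamma}\dd s$. What your Riemann-sum argument actually needs is that $E[U_N(U_N-1)]/(N(N-1))$ stays bounded away from $0$ (equivalently, $\lambda_N$ bounded, i.e.\ a simple $\Lambda$-coalescent), in which case $\kappa=\lim_N\Lambda([0,1])/\lambda_N>0$ and everything you wrote goes through. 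You should either work under that stronger condition or explain why the literal hypothesis suffices. Note also that the role the stated condition genuinely plays is a different one: it is the condition under which variant~$A$ has $Nc_N\nrightarrow 0$, which obstructs the variable-size bookkeeping (population-size increments must be realisable one individual per generation without inflating the large family), and $\gamma>1$ restores $Nc_N\to 0$ for variant~$B$; your write-up does not register this point.
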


\begin{proposition}\index{coalescent!beta!$n$-!with time change}
  Let $\Lambda$ be a $Beta(a,b)$-distribution with $a\in(0,1)$ and
  $b>0$. Let $\nu:\mathbb{R}_{\geq 0}\to \mathbb{R}_{>0}$.  Then,
  there exist population sizes satisfying
  Eq.~\eqref{FF-eq:coaltimeNchange} for $\nu$ so that the genealogies
  $(\cR^{(N)}_r)_{r\in\mathbb{N}_0}$ of the modified Moran model with
  variable population sizes (variant $A$) fulfill
  Eq.~\eqref{FF-eq:timechanged_coal} with
  $g(t)=\int^t_0 (\nu(s))^{a-2} \dd s$ and $(\Pi_t)_{t\geq 0}$ is a
  $\mathrm{Beta}(a,b)$-$n$-coalescent.
\end{proposition}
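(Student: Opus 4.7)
The plan is to deduce the proposition from Theorem~1 of \cite{FF-freund2019cannings} (described in the paragraph above the statement), whose proof reduces to verifying the hypotheses of \cite[Thm.~2.2]{FF-mohle2002coalescent} for the time-inhomogeneous modified Moran model of variant~$A$. At fixed size $N$, variant~$A$ samples $U_N$ as the size of the first merger in a $\mathrm{Beta}(a,b)$-$N$-coalescent and, by \cite[Prop.~3.4]{FF-huillet2013extended}, its rescaled genealogies already converge to a $\mathrm{Beta}(a,b)$-$n$-coalescent with scaling $c_N = \mathbb{E}[U_N(U_N-1)]/[N(N-1)]$. So the only genuinely new input is the interaction of this scaling with a varying population size.

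The key quantitative ingredient is the explicit asymptotic
\[
c_N \;=\; C_{a,b}\, N^{a-2}\bigl(1+o(1)\bigr) \qquad (N\to\infty)
\]
for a positive constant $C_{a,b}$, obtained from $\lambda_{N,k}=B(a+k-2,\,b+N-k)/B(a,b)$ and a standard localisation of the mass of $x^{a-1}(1-x)^{b-1}\,\dd x$ near the origin via the substitution $x=u/N$. Since $a\in(0,1)$, this in particular gives $c_N\to 0$ without any need for the dampening of variant~$B$. Given $\nu$, I would then construct population sizes $(N_r)_{r\geq 0}$ with $N_0=N$ satisfying Eq.~\eqref{FF-eq:coaltimeNchange} compatibly with the offspring mechanism described just before the statement (for instance $N_r=\lfloor N\,\nu(r c_N)\rfloor$, adjusted generation by generation so that additional offspring can be assigned either to the reproducing individual or to otherwise childless parents). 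The asymptotic for $c_N$ combined with Eq.~\eqref{FF-eq:coaltimeNchange} then yields $c_{N_r}/c_N\to\nu(t)^{a-2}$ for $r=\lceil tc_N^{-1}\rceil$, locally uniformly on $\{t:\nu(t)>0\}$, and a Riemann-sum argument gives
\[
c_N^{-1}\sum_{s=0}^{\lceil tc_N^{-1}\rceil-1} c_{N_s} \;\longrightarrow\; \int_0^t \nu(s)^{a-2}\,\dd s \;=\; g(t),
\]
which is exactly the claimed time change.

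Feeding this into \cite[Thm.~2.2]{FF-mohle2002coalescent} then produces Eq.~\eqref{FF-eq:timechanged_coal}. The main obstacle is verifying the hypotheses of Möhle's theorem uniformly along the trajectory $(N_r)$: one needs the per-generation rescaled merger rates to converge to the $\mathrm{Beta}(a,b)$-$n$-coalescent rates \emph{locally uniformly} in the current population size, and one needs the simultaneous-multiple-merger contribution to vanish (which is automatic here, since variant~$A$ is a $\Lambda$-scheme). This uniformity is what allows the per-generation errors to be accumulated to the coalescent time scale; it is not covered by the fixed-$N$ result of \cite[Prop.~3.4]{FF-huillet2013extended} directly, and supplying it, together with the explicit $c_N$-asymptotic above, is the technical core of the argument in \cite{FF-freund2019cannings}.
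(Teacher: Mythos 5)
Your proposal follows essentially the same route as the paper: the text surrounding the proposition only sketches the argument (deferring the details to \cite{FF-freund2019cannings}), namely verifying \cite[Thm.~2.2]{FF-mohle2002coalescent} for the variant-$A$ modified Moran models of \cite[Prop.~3.4]{FF-huillet2013extended} with the downsampling/offspring-adding construction for varying $N_r$, and your identification of the asymptotic $c_N\sim C_{a,b}\,N^{a-2}$ as the source of the exponent $a-2$ in $g$ is exactly the right quantitative ingredient. No gaps worth flagging.
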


For the standard Moran model, the second proposition holds for $a=0$
and the limit coalescent is a time-changed Kingman-$n$-coalescent, see
\cite[Prop. 1]{FF-freund2019cannings}. Thus, when comparing to
Eq.~\eqref{FF-eq:timechanged_coal} for the Wright--Fisher model, the
same population size changes $\nu$ on the coalescent time scale lead
to different time changes of the limit Kingman 
$n$-coalescent. Similarly, for a given $\Lambda$ and $\nu$, there can
be other Cannings models whose genealogies, for constant population
size, have the same coalescent limit, but where having the same
population size profile $\nu(t)$ on the coalescent time scale still
leads to different time scalings $g$.  For instance, consider
Schweinsberg's Cannings model from \cite{FF-Schweinsberg2003}, whose
genealogies converge to Beta-$n$-coalescents, as described in the
first section.  The potential offspring produced from a parent
generation in the fixed-size setting is enough, at least with very
high probability $\to 1$ for $N\to\infty$, to also sample an increased
number of offspring from these, for any population size changes
allowed by \eqref{FF-eq:coaltimeNchange}.  For decreasing population
size, just sampling less offspring of course also works. Again,
\cite[Thm. 2.2]{FF-mohle2002coalescent} can be applied. If, for
constant population size, the discrete genealogies of $[n]$ converge
to the $Beta (2-\alpha,\alpha)$-$n$-coalescent for $1\leq \alpha < 2$,
when adding population size changes described by $\nu$, the time
change in Eq.~\eqref{FF-eq:timechanged_coal} is given by
$\int_0^t (\nu(s))^{1-\alpha} \dd s$, see
\cite[Thm. 2.8]{FF-freund2019cannings}. Again, this is a different
timescale than in the case of the modified Moran models above. Even
more disturbingly, for $\alpha=1$ the distribution of the limit
genealogy is invariant under any population size change allowed by
Eq.~\eqref{FF-eq:coaltimeNchange}.
   
\section{How much genetic information is contained in a subsample?}

For managing genetic resources in crops, gene banks hold many
accessions of a crop, e.g. as seeds (here accession can be understood
as individual plant sampled at random, see~\cite{FF-FAO} 
for a thorough
definition). However, resources of gene banks are limited and thus not
all individual plants can be stored. Individuals grown from such seeds
can be used as crossing partners to introduce genetic variation not
yet present into a breeding program. In terms of genealogies, new
variation is available if the genalogy of the material already in the
breeding program together with the gene bank material has additional
branches with mutations as compared to the genalogy of the material used
in the breeding program. How much do different genealogy models affect
this amount of added genetic variation?

Mathematically, this corresponds to assessing the overlap of the
nested genealogies of a sample of size $n$ and an arbitrary subsample
of size $m$ (and the mutations on the non-overlapping
branches). Assume that the genealogy of the $n$ sampled individuals is
given by a $\Xi$-$n$-coalescent. From the Poisson construction, it
follows that the subsample is a $\Xi$-$m$-coalescent with the same
measure $\Xi$, a property called natural coupling. Due to
exchangeability, for questions concerning the distribution of sharing
aspects of the genealogy it does not matter which individuals are in
the subsample and in the sample, thus we always assume the subsample
of size $m$ is $[m]$\index{coalescent!Xi@$\Xi$-$n$-!subsampling}.

For Kingman's $n$-coalescent, questions about how much of the
genealogy is covered have been discussed in
\cite{FF-saunders1984genealogy}. For instance, the subsample's
genealogy covers the root of the sample's genealogy with probability
$p^{(\delta_0)}_{n,m}=\frac{m-1}{m+1}\frac{n+1}{n-1}$ for
$m\leq n\in\mathbb{N}$. If the root is shared, any mutations on the
two branches starting in the root are also present in the
sample. Together with Eldon~\cite{FF-Eldon2018}, I showed
that this probability can also be computed recursively for any
$\Lambda$-$n$-coalescent as
\begin{equation}\label{FF-eq:pnmrec}
  p_{n,m}^{(\Lambda)} =  \sum_{k=2}^n \lambda(n,k)\sum_{\ell = 0}^{k\wedge m}
  \frac{\binom{n-m}{k-\ell}
    \binom{m}{\ell}}{\binom{n}{k}}p_{n-k+1,m^\prime}^{(\Lambda)},
\end{equation}
where
\[
  \lambda(n,k)=\frac{\binom{n}{k}\lambda^{}_{n,k}}{\sum_k \lambda^{}_{n,k}},
  \quad 
  \lambda^{}_{n,k}=\int_0^1 x^{k-2} (1-x)^{n-k} \Lambda(\dd x) 
  \]
is the probability of transition by merging any $k$ of $n$ blocks present and
$m^\prime = (m-\ell + 1)\one_{\{\ell > 1\}} + m\one_{\{\ell \le
  1\}}$. As boundary conditions, we have $p^{(\Lambda)}_{k,k} = 1$ for
$k\in\mathbb{N}$ and $p^{(\Lambda)}_{k,1}=0$ for $k\geq 2$. As many
recursions for $\Lambda$-$n$-coalescents, this can be proven by
conditioning on the first jump $T_1$ of the $\Lambda$-$n$-coalescent:
The genealogy cut at $T_1$, keeping all branches connected to the
root, is again a $\Lambda$-$k$-coalescent tree, where $k$ is the
number of blocks/branches present at $T_1$. Eq.~\eqref{FF-eq:pnmrec}
then only sums over all possible mergers of $k$ blocks, where $\ell$
blocks of the subsample and $k-\ell$ of the sample without the
subsample are merged. If $\ell=m$, the root of the subtree of $[m]$ is
reached, and thus the root is shared if and only if no blocks in
$[n]\setminus [m]$ are unmerged, which is encoded by the boundary
condition. Using the recursion, we can compare $p^{(\Lambda)}_{m,n}$
between $\Lambda$-$n$-coalescents, see Figure~\ref{FF-fig:pnm}.
\begin{figure}[t]
\begin{center}
\includegraphics[width=0.7\textwidth]{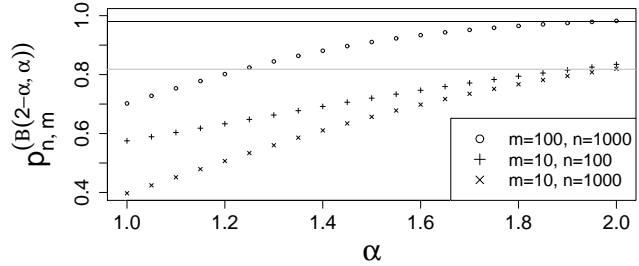}
\end{center}
\caption{\label{FF-fig:pnm} The probability $p^{(\Lambda)}_{n,m}$ of
  sharing the root between sample and subsample for
  $Beta (2-\alpha,\alpha)$-$n$-coalescents. Solid lines show the
  asymptotic probabilities $\lim_{n\to\infty} p^{(\delta_0)}_{n,m}$
  for Kingman's $n$-coalescent for $m=10$ (in grey) and $m=100$ (in
  black).}
\end{figure}
Fig.~\ref{FF-fig:pnm} may raise the question whether Kingman's
$n$-coalescent is the $\Xi$-$n$-coalescent that has maximum
$p_{n,m}^{(\Xi)}$. This is not true, for instance
$\Xi$-$n$-coalescents that are essentially star-shaped, i.e. merge all
individuals at the first merger with high probability, have a higher
$p_{n,m}^{(\Xi)}$ if the probability for being star-shaped is high
enough.

Due to the exchangeability of the $\Xi$-$n$-coalescent,
$p^{(\Xi)}_{n,m}$ can alternatively be expressed in terms of the block
sizes of the exchangeable partition of the $\Xi$-$n$-coalescent
shortly before absorption.

\begin{proposition}\cite[Props. 2 and 5]{FF-Eldon2018} 
 For any $\Xi$-$n$-coalescent,
\begin{equation}\label{FF-eq:prob_nsameMRCA}
p^{(\Xi)}_{n,m}= 1 -  E\Big(\sum_{i\in\NN}\prod_{k=0}^{m-1}
\frac{B^{(n)}_{[i]}-k}{n-k}\Big) >0,
\end{equation}
where $B^{(n)}_{[1]},B^{(n)}_{[2]},\ldots$ are the sizes of the blocks
of $\Pi^{(n)}$ merged at absorption, ordered by size from largest to
smallest (filled up by empty blocks). The limit
$p_m^{(\Xi)}:=\lim_{n\to\infty}p^{(\Xi)}_{(n,m)}$ exists and is greater 
than $0$ if and only if the $\Xi$-coalescent does not stay infinite. If
the $\Xi$-coalescent comes down from infinity, we have
\begin{equation}\label{FF-eq:rep_p_lastcollfreq}
  p^{(\Xi)}_{n,m}\to  1 - E\Big(\sum_{i\in\NN}P_{[i]}^m\Big) =
  1- E(X^{m-1})=1-\frac{ E(Y^m) }{E(Y)} >0
\end{equation}
for fixed $m$ and $n\to\infty$, where
$P_{[i]}:=\lim_{n\to\infty} B^{(n)}_{[i]} / n$ is the asymptotic
frequency of the $i$-th largest block merged at absorption, $X$ is the
asymptotic frequency of a size-biased pick and $Y$ is the asymptotic
frequency of a block picked uniformly at random from the blocks
merging at absorption.
\end{proposition}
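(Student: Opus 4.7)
My plan starts by identifying the MRCA-coincidence event in terms of the partition $\Pi^{(n)}_{\tau_n^-}$ present just before absorption at time $\tau_n$. The MRCAs of sample and subsample agree if and only if $[m]$ is not wholly contained in a single block of $\Pi^{(n)}_{\tau_n^-}$: if $[m]\subseteq B$ for some such block $B$, then the MRCA of $[m]$ is strictly older than $\tau_n$; whereas if $[m]$ meets at least two blocks, they first merge into $[n]$ exactly at $\tau_n$. By exchangeability of the $\Xi$-$n$-coalescent, conditionally on the ordered block sizes $B^{(n)}_{[\cdot]}$ the labels inside the $i$-th block form a uniformly random size-$B^{(n)}_{[i]}$ subset of $[n]$, so an urn computation gives $P([m]\subseteq \text{$i$-th block}\mid B^{(n)}_{[\cdot]})=\binom{B^{(n)}_{[i]}}{m}/\binom{n}{m}=\prod_{k=0}^{m-1}(B^{(n)}_{[i]}-k)/(n-k)$. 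Summing over $i$ and taking expectation yields Eq.~\eqref{FF-eq:prob_nsameMRCA}; positivity for $m\ge 2$ follows from the deterministic strict inequality $\sum_i\binom{B^{(n)}_{[i]}}{m}<\binom{n}{m}$, which holds whenever the partition has at least two blocks, since some $m$-subset of $[n]$ must cross two of them.

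\textbf{Step 2: the non-staying-infinite case.} I would then invoke the introductory fact that, when $\Pi$ does not stay infinite, there is an a.s.-finite path-dependent $n_0$ such that for $n\ge n_0$ the blocks of $\Pi^{(n)}_{\tau_n^-}$ are exactly the traces on $[n]$ of the (finitely many) blocks of $\Pi$ merged at its last collision. Hence $B^{(n)}_{[i]}/n\to P_{[i]}$ a.s.\ by de Finetti/Kingman's correspondence, and $\sum_i P_{[i]}=1$ a.s.\ since finitely many blocks of $\NN$ must cover $\NN$. Since the summand in Eq.~\eqref{FF-eq:prob_nsameMRCA} is bounded by $1$, dominated convergence gives $p^{(\Xi)}_{n,m}\to 1-E(\sum_i P_{[i]}^m)$. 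If $\Pi$ additionally comes down from infinity, each $P_{[i]}>0$ a.s., so $\sum_i P_{[i]}^m<1$ a.s.\ for $m\ge 2$, giving Eq.~\eqref{FF-eq:rep_p_lastcollfreq} with strict positivity. The two alternative representations are algebraic: with $X$ a size-biased pick one has $E(X^{m-1}\mid P_{[\cdot]})=\sum_i P_{[i]}\cdot P_{[i]}^{m-1}=\sum_i P_{[i]}^m$; and reading $Y$ as a uniformly picked block in the Palm sense that weights each (realisation, block) pair equally gives $E(Y^m)=E(\sum_i P_{[i]}^m)/E(K)$ and $E(Y)=1/E(K)$, hence $E(Y^m)/E(Y)=E(\sum_i P_{[i]}^m)$.

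\textbf{Step 3: the staying-infinite case---the main obstacle.} Here $\tau_n\to\infty$ a.s., so $\Pi$ itself has no last collision to anchor the limit. To conclude $p^{(\Xi)}_{n,m}\to 0$, equivalently $E(X_n^{m-1})\to 1$ with $X_n=B^{(n)}_1(\tau_n^-)/n$ the frequency of the block containing the label $1$ (a size-biased pick of $B^{(n)}_{[\cdot]}/n$ by exchangeability), I would use monotonicity of $B^{(n)}_1(\cdot)$ in $t$: on $\{\tau_n>T\}$ one has $X_n\ge B^{(n)}_1(T)/n\to P_1(T)$ a.s.\ by Kingman's correspondence applied to the exchangeable partition $\Pi_T$, where $P_1(T)$ is the asymptotic frequency of the block of $\Pi_T$ containing $1$. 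Since $P(\tau_n>T)\to 1$ for every fixed $T$, the dichotomy reduces to $P_1(T)\to 1$ in probability as $T\to\infty$---intuitively, in the absence of absorption for $\Pi$ the mass must eventually concentrate on a single dominant block, as is known for the Bolthausen--Sznitman and related staying-infinite coalescents. Turning this concentration intuition into a rigorous statement, presumably via the frequency-process criteria in~\cite{FF-freund2017size,FF-herriger2012conditions}, is where I expect the real work of the proof to lie.
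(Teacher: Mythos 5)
Your Steps 1 and 2 are correct and follow essentially the same route as the paper: the complement event is $[m]$ being contained in a single block of the partition just before absorption, exchangeability gives the hypergeometric expression in Eq.~\eqref{FF-eq:prob_nsameMRCA}, and in the non-staying-infinite case the identification of the pre-absorption blocks with the traces of the finitely many blocks of the $\Xi$-coalescent merged at its last collision, plus Kingman's correspondence and bounded convergence, yields Eq.~\eqref{FF-eq:rep_p_lastcollfreq}; your reading of $Y$ in the Palm (size-unbiased) sense is the intended one.

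The genuine gap is Step 3, and it stems from choosing a harder route than necessary. To prove $p_m^{(\Xi)}=0$ when the coalescent stays infinite you do not need any concentration statement about block frequencies. By the natural coupling, the restriction of the $\Xi$-$n$-coalescent to $[m]$ is the $\Xi$-$m$-coalescent, so the subsample's absorption time $\tau_m$ is an almost surely finite random variable that does not change with $n$; the root is shared if and only if $\tau_n=\tau_m$, and since staying infinite forces $\tau_n\uparrow\infty$ almost surely, the indicator $\one_{\{\tau_n=\tau_m\}}$ tends to $0$ almost surely and bounded convergence gives $p^{(\Xi)}_{n,m}\to 0$. This is the paper's argument, and it is one line. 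Your reduction to ``$f_1(T)\to 1$ in probability as $T\to\infty$'' is a true statement, but you leave it unproven and flag it as the main open difficulty, so as written the ``only if'' direction of the dichotomy is not established. (If you do want to complete your route: exchangeability gives $E\big(f_1(t)\big)=P(1\sim_{\Pi_t}2)$, and any two distinct blocks merge at rate at least $\Xi(\Delta)>0$ in the Poisson construction, so $E\big(f_1(t)\big)\geq 1-\ee^{-\Xi(\Delta)t}\to 1$; since $f_1(t)\leq 1$ this yields convergence in probability. But this machinery is unnecessary here.) A secondary, smaller point: your positivity argument for the limit covers only the coming-down-from-infinity case, whereas the claim is positivity whenever the coalescent does not stay infinite; for that one needs (as the paper asserts) that no block at the last collision has asymptotic frequency $1$, which your write-up does not address.
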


\begin{beweis} This is a condensed version of the proof, focusing on
  the ideas and omitting technicalities.
  
  To see Eq.~\eqref{FF-eq:prob_nsameMRCA}, first observe that
  $1-p^{(\Lambda)}_{(n,m)}$, the probability that the root is not
  shared, means that all individuals of $[m]$ have merged before the
  last merger. Thus, we need to have $[m]\subseteq B^{(n)}_{[i]}$ for
  some block of the partition merged at absorption. Given
  $B^{(n)}_{[1]},\ldots,B^{(n)}_{[n]}$, the probability of
  $[m]\subseteq(B^{(n)}_{[i]}$ 
  is $\sum_{i=1}^{n}\prod_{\ell=0}^{m-1}$ $\big(B^{(n)}_{[i]}-
  \ell\big)/(n- \ell)$ for an $i\in[n]$ 
  due to exchangeability.
    
  This allows us to consider
  $p_m^{(\Lambda)}:=\lim_{n\to\infty}p^{(\Lambda)}_{(n,m)}$ for fixed
  $m$. From the Poisson construction, one sees that adding more
  individuals can only add branches and nodes to the tree. Thus
  $p^{(\Lambda)}_{(n,m)}$ is monotonic in $n$ for $m$ fixed. When is
  the thus existing limit $p_m^{(\Xi)}=0$? If the $\Xi$-coalescent
  stays infinite, the waiting time for absorption, the height of the
  genealogy, diverges almost surely for $n\to\infty$. Thus, the root
  cannot be shared almost surely, so $p_m^{(\Xi)}=0$. Consider now
  the case that the coalescent does not stay infinite.  As discussed above,
  this also means that the heights of the $\Xi$-$n$-coalescents equal
  the height of the $\Xi$-coalescent for $n$ large enough.
  Exchangeability (and elementary, yet technical
  arguments) ensures that the asymptotic frequencies of the blocks
  participating in the last merger exist. There are at least two
  blocks, no block has frequency 1, and again exchangeability ensures
  that with positive probability, $[m]$ is not a subset of a single
  block. This shows $p^{(\Xi)}_m>0$ if the coalescent does not stay
  infinite.  The convergence in Eq.~\eqref{FF-eq:rep_p_lastcollfreq}
  follows directly due to the link between block sizes and asymptotic
  frequencies, while the characterisations using the size-biased and
  uniform picks are just standard characterisations of asymptotic
  frequencies in exchangeable partitions on $\NN$.
\end{beweis}

If one knows the moments of the asymptotic frequencies of the blocks
of $\Pi^{(\Xi)}$ participating in the last merger,
Eq.~\eqref{FF-eq:rep_p_lastcollfreq} would give an explicit formula
for $p_m^{(\Xi)}$. For Beta-coalescents that come down from
infinity\index{coalescent!beta!$n$-}, i.e. $1<\alpha<2$, there is an
explicit characterisation of asymptotic frequencies, conditioned on
the event that the Beta-coalescent is in a state with $k$ blocks,
$k\in\mathbb{N}$. In this case, the asymptotic frequencies can be
expressed in terms of Slack's distribution,\index{Slack's distribution}\index{distribution!Slack's} see
\cite[Thm. 1.2]{FF-berestycki2008small}. Since the distribution of the
number of blocks $K$ participating in the last merger of this class of
coalescents is also known, see \cite{FF-henard2015fixation}, we can
represent $p^{(\Xi)}_m$ as
\begin{equation}\label{FF-eq:pmbeta}
p^{(Beta (2-\alpha,\alpha))}_m=1-\sum_{k\in\NN} 
 k \frac{E\left( \frac{Y_1^m}{ (Y^{}_1+\ldots +
    Y^{}_k)^{\alpha + m - 1} } \right)}{E\left((Y^{}_1+\ldots +Y^{}_k)^{1-\alpha}
   \right)}P(K=k),
\end{equation}
where $K$ has generating function
$E(u^K) = \alpha u \int_0^1 (1 - x^{1 - \alpha})^{-1} ( (1 -
ux)^{\alpha-1} - 1)\dd x$ for $u \in [0,1]$ and $(Y_i)_{i\in\NN}$ are
i.i.d. and have Slack's distribution with Laplace transform
$ E\left(e^{-\lambda
    Y_1}\right)=1-(1+\lambda^{1-\alpha})^{-1/(\alpha-1)}$.

For $\alpha=1$, so for the Bolthausen--Sznitman ($n$-)coalescent, a
more explicit representation can be derived from the connection to the
uniform random permutation, aka the Chinese restaurant
process.\index{Chinese restaurant process}\index{coalescent!Bolthausen--Sznitman!$n$-}
\index{process!Chinese restaurant}

\begin{proposition}{\cite[Prop. 4]{FF-Eldon2018}}
  Let $B_1,\ldots,B_{n-1}$ be independent Bernoulli random variables
  with $P(B_i = 1) = i^{-1}$. For $2 \leq m< n$,
\begin{equation}\label{FF-eq:pnm_bsz}
 p_{n,m}^{(Beta (1,1))} =
E\left(\frac{B^{}_1+\ldots+B^{}_{m-1}}{B^{}_1+\ldots+B^{}_{n-1}}\right).
\end{equation}
Moreover, 
\[
\log(n) p_{n,m}^{(Beta (1,1))} \to \sum^{m-1}_{i=1} i^{-1} \text{ for } 
n\to\infty \text{ and } m \text{ fixed.}
\]
\end{proposition}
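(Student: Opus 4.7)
The plan is to exploit the random recursive tree construction of the Bolthausen--Sznitman $n$-coalescent reviewed in the introduction: the block containing $1$ absorbs, at i.i.d.\ $\mathrm{Exp}(1)$ waiting times $E_1,\ldots,E_K$, each of the $K$ cycles of a uniform random permutation of $\{2,\ldots,n\}$. The MRCA of $[n]$ is attained at $\max_i E_i$, while the MRCA of the subsample $[m]$ is attained at the last time a cycle intersecting $\{2,\ldots,m\}$ joins block $\{1\}$; hence the two MRCAs coincide iff the argmax of $(E_i)_{i=1}^K$ indexes a ``marked'' cycle (one containing an element of $\{2,\ldots,m\}$). Exchangeability of the exponentials makes the argmax uniform on $\{1,\ldots,K\}$ conditional on the cycle decomposition, so writing $M_{n,m}$ for the number of marked cycles,
$$p_{n,m}^{(Beta(1,1))} = E\!\left(\frac{M_{n,m}}{K}\right).$$

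To match this with the right-hand side of \eqref{FF-eq:pnm_bsz}, I would use the Feller coupling of the uniform random permutation with the Chinese restaurant process, inserting the elements $2,3,\ldots,n$ sequentially in increasing order. Let $B_j$ be the indicator that the $j$-th insertion (of element $j+1$) opens a new table; then the $B_j$ are independent $\mathrm{Bernoulli}(1/j)$ and $K = \sum_{j=1}^{n-1} B_j$. The key combinatorial identity is $M_{n,m} = \sum_{j=1}^{m-1} B_j$: after the first $m-1$ insertions every existing table contains at least one element of $\{2,\ldots,m\}$, and the number of such tables is exactly $\sum_{j=1}^{m-1} B_j$; the remaining insertions introduce only elements exceeding $m$, so they cannot unmark a table, and any new table opened at step $\geq m$ starts with an element exceeding $m$ and acquires no further element in $\{2,\ldots,m\}$. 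Taking expectations yields \eqref{FF-eq:pnm_bsz}.

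For the asymptotics, set $S_n = K = \sum_{j=1}^{n-1} B_j$ with $E(S_n) = H_{n-1} \sim \log n$. A Chernoff bound gives $P(S_n \leq \tfrac12 \log n) \leq n^{-c}$ for some $c>0$ and large $n$, while $S_n \geq B_1 = 1$ deterministically bounds $(\log n)/S_n$ by $\log n$ on the exceptional set and by $2$ elsewhere; combined with $(\log n)/S_n \to 1$ in probability this yields $E((\log n)/S_n) \to 1$. For each fixed $j \in \{1,\ldots,m-1\}$, conditioning on $B_j\in\{0,1\}$ gives
$$\log n \cdot E\!\left(\frac{B_j}{S_n}\right) = \frac{\log n}{j}\, E\!\left(\frac{1}{1+\sum_{i\in[n-1]\setminus\{j\}} B_i}\right) \longrightarrow \frac{1}{j},$$
since replacing $B_j$ by $1$ perturbs $S_n$ by at most one. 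Summing over $j=1,\ldots,m-1$ produces $\log(n)\, p_{n,m}^{(Beta(1,1))} \to H_{m-1}$.

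The main obstacle is setting up the coupling so that the exact identity $M_{n,m} = \sum_{j=1}^{m-1} B_j$ holds pathwise rather than just in distribution; the delicate point is fixing the increasing insertion order so that the subsample indices $2,\ldots,m$ are inserted first, which forces the marked tables to be precisely those opened during the first $m-1$ Chinese-restaurant steps. Once this combinatorial bijection is in place, the asymptotic reduces to the classical concentration of the cycle count of a uniform random permutation around $H_{n-1}$.
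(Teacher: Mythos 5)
Your argument is correct and is essentially the paper's own proof: both reduce the event of sharing the root to the last cycle of the uniform random permutation of $\{2,\ldots,n\}$ joining the block of $1$ being one that meets $[m]$, and both use the Chinese restaurant process to identify the marked and total cycle counts as $\sum_{j=1}^{m-1}B_j$ and $\sum_{j=1}^{n-1}B_j$. You merely spell out the details (the uniform argmax step, the pathwise identity for the marked cycles, and the concentration argument) that the paper compresses into ``standard arguments establish its limit.''
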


\begin{beweis}
  The last merger of the Bolthausen--Sznitman $n$-coalescent has to
  feature one block including the individual 1. As described in the
  first section, the connection to the random recursive tree allows us
  to see the individuals subsequently merged to the block including 1 as
  adding the cycles of a uniform random permutation of
  $\{2,\ldots,n\}$ in random order.  Since $1\in[m]$, the root is
  shared between subsample and sample if and only if the cycle added
  at the last merger also includes at least one $j\in[m]$. Using
  e.g. the Chinese restaurant process construction of the uniform
  random permutation, one sees that there are $\sum_{i=1}^{n-1} B_i$
  cycles in the random permutation, from which $\sum_{i=1}^{m-1} B_i$
  contain $j\in[m]$. This shows Eq.~\eqref{FF-eq:pnm_bsz} and standard
  arguments establish its limit for $n\to\infty$.
\end{beweis}

So what do these results imply for real populations? The main message
is indeed Figure~\ref{FF-fig:pnm}: Multiple merger genealogies, unless
essentially star-shaped, will have a (often much) higher chance that
enlargening collections of individuals uncovers new ancestral
variation that then can be used for breeding. Adding population size
changes of order $N$ only changes the distribution of branch
lengths but not the tree topology (see Sect.~\ref{FF-sec:MMCpopsized})
for all $\Lambda$-$n$-coalescents, so this holds true if we consider
e.g. exponentially growing populations. However, if we are concerned
about the actual shared genealogy, the picture changes somewhat.
Using simulations, we assessed the fraction of internal branch length
of the genealogy of $[n]$ that is already covered by the genealogy of
$[m]$, see~\cite[Fig. 3-6]{FF-Eldon2018}. On average, for Kingman's
$n$-coalescent a larger fraction of internal branches is covered by
the subsample's genealogy than for Beta-$n$-coalescents, while this is
reversed for Kingman's $n$-coalescent with strong exponential growth.
Additionally, the coverage is much more variable for
Beta-coalescents. Nevertheless, at least for small to medium growth
rates, the benefit of larger samples in terms of added ancestral
variation is stronger for multiple-merger genealogies. The results
highlight that inferring the correct genealogy model can be relevant
for real-life decisions concerning the population at hand.

One more technical, yet in my opinion important point should be
discussed here. While we can characterise the limit behaviour of
$p^{(\Xi)}_{n,m}$ for $n\to\infty$, this may not be of too much
relevance for the practical application. This is not so much due to
having finite sample sizes, but it is already questionable whether for
$n$ large, the coalescent approximation still resembles the genealogy
in the Cannings model, see e.g.~\cite{FF-bhaskar2014distortion} for
addressing this issue for Kingman's $n$-coalesent.  However, due to
the monotonicity of $p^{(\Xi)}_{n,m}$ for $m$ fixed and $n$
increasing, the limit results always give a lower bound for the
probability of sharing the root between sample and subsample.

The results described in this section show that adding individuals for
$\Lambda$- and $\Xi$-$n$-coalescent will usually have the effect that
the genealogical tree is enlarged by a higher factor (increased
height, but also increased internal branch length) than for Kingman's
$n$-coalescent. This may point to a conjecture for multiple-merger
coalescents about the question posed by Felsenstein in
\cite{FF-Felsenstein2005}: ``Do we need more sites, more sequences, or
more loci?'' if it comes to population genetic inference. While for
Kingman's $n$-coalescent, sample size is less important, our results
indicate that this importance is elevated for multiple-merger
$n$-coalescents.

\section{Model selection between $n$-coalescents}\index{model selection}

During the two phases of the SPP 1590, much progress has been made on
inference methods for deciding whether multiple merger genealogies
 explain the genetic diversity of a sample better than
bifurcating genealogy models based on Kingman's $n$-coalescent, both
inside and outside of the SPP. For the former, see also the chapter
 by Birkner and Blath~\cite{FF-MBJB20}. More formally, one is
interested in performing model selection between two or more sets of
$n$-coalescent models, each endowed with a range of mutation rates,
based on the values of one or multiple statistics for measuring
genetic diversity\index{coalescent!Xi@$\Xi$-$n$-!model selection}. Many
methods focus on the site frequency spectrum
$(\xi^{(n)}_1,\ldots,\xi^{(n)}_{n-1})$ as inference statistics, where
$\xi_i^{(n)}$ is the number of mutations that are inherited by exactly
$i$ individuals in the sample, i.e. that have mutation allele count
$i$, see e.g. \cite{FF-Eldon2015,FF-koskela2018multi, FF-koskela2018robust,
FF-Matuszewski2017}.  However,
genetic data contains more information than the site frequency
spectrum (SFS) and this surplus information can also be used to
perform model selection, see e.g. \cite{FF-Kato171060} or
\cite{FF-Rice461517}. A reliable method to use multiple statistics for
model selection is Approximate Bayesian Computation (ABC).\index{approximate Bayesian computation} ABC uses a
computational version of a Bayes approach, see e.g. \cite{FF-Sunn2013}
for an introduction. Consider we want to perform model selection
between two models with equal \textit{a priori} probability for each
model. Data is represented by summary statistics.  In the simplest ABC
approach (rejection scheme), statistics are simulated $N\gg 1$ times
with model parameters drawn from each model's prior distribution (so
$2N$ sets of summary statistics are produced). Then the posterior odds
ratio between models is approximated by the ratio of the numbers of
simulations from each model that are equal/very close to the observed
value of the statistics (the quality of approximation depends e.g. on
the sufficiency of the statistics used). See~\cite{FF-Lintu2017} for a
recent review of more sophisticated approaches.

In \cite{FF-Kato171060}, a variety of standard population genetic
statistics were used in an ABC approach for model selection between
Beta-$n$-coalescents and Kingman's $n$-coalescent, both with
exponential growth on the coalescent time scale. The SFS was further
summarised by using its sum, the total number of observed mutations as
well as the $(0.1,0.3,0.5,0.7,$ $0.9)$-quantiles of the mutation allele
counts (this set of quantiles will be denoted by $AF$). Additionally,
the authors used the same range of quantiles of the Hamming distances between
all pairs (set of statistics denoted by $Ham$), of the squared
correlation $r^2$ between the frequencies in the sample for each pair
of mutations and, for a reconstruction of the
genealogical tree by a standard phylogenetic methods
(neighbor-joining), of the set of all branch lengths in the
reconstructed tree (denoted by $Phy$).  For distinguishing
Beta-$n$-coalescents from Kingman's $n$-coalescent, both also
accounting for exponential growth on the coalescent time scale, they
reported considerably lower error rates than earlier methods,
e.g.~\cite{FF-Eldon2015}, while also considering a slightly different
setup. A crucial difference is that in \cite{FF-Kato171060}, the
range of mutation rates is identical for all $n$-coalescents considered.
Since the distributions of height and total branch length differ
strongly between different $n$-coalescent models, the number of
observed mutations is also different. If statistics are used that
are non-robust to the number of mutations on the genealogy, e.g. the
number of segregating sites, this approach needs large ranges for the
mutation rate to be able to reproduce these statistics as observed,
which is not efficient for performing simulation-based inference
approaches as ABC.  If the parameter ranges are not large enough,
model classes may produce the comparable numbers of mutations only
with low probability or not at all, thus likely being discarded as the
true model class. Thus, another approach (used in the other inference
approaches mentioned) is to use mutation rates $\theta$ that produce
a number of mutations in the range of the number $s$ of mutations 
observed in the data, e.g. by setting
$\theta=\frac{2s}{E_{coal}(L_n)}$ for each specific member $coal$ of a
model class's $n$-coalescents, which is the generalised form of
Watterson's estimator\index{Watterson's estimator} for $\theta$.
Here, $L_n$ denotes the sum of
the lengths of all branches of the coalescent tree.  With 
Siri-J\'egousse, I analyse which statistics
considered by~\cite{FF-Kato171060} are actually best to distinguish
between different model classes featuring Kingman's $n$-coalescent
with exponential growth and several classes of $\Xi$-$n$-coalescents,
including ones with exponential growth \cite{FF-Freund2019a}.  For
this, we use the ABC approach for model selection based on random
forests\index{random!forest} from~\cite{FF-pudlo2015reliable}
(random forests are a widely
used machine learning approach introduced in \cite{FF-Breiman2001}).
Since it is an approximate Bayesian method, we need to specify prior
distributions on both the coalescent classes and their mutation
ranges. However, the approach from \cite{FF-pudlo2015reliable} differs
drastically from the ABC approach presented above.
\index{ABC, random-forest based}
In a nutshell, the method simulates a set of summary
statistics $S_1,S_2,\ldots$ from each model class under its prior
distribution, then takes bootstrap samples of these simulations. From
each bootstrap sample, a decision tree is built, whose nodes have the
form $S_i>t$ or $S_i<t$ for some $t\in\RR$.  For each node, the
statistic that distinguishes best between model classes (normally
measured by the Gini index, we use a slightly different measure), from
a randomly drawn subset of statistics, is chosen as decider
$S_i$. Nodes are added until the tree divides the
bootstrap sample perfectly into sets of simulations from the same model
class. The observed data is then sorted into a model class according
to the decision tree for each bootstrap sample (so for the complete
random forest), and the model selection is then the majority class
across the forest.  In other words, instead of using that the true
model should produce summary statistics closer to the observed ones
than another model, as e.g. rejection-scheme ABC, ABC based on random
forests trains a forest of decision trees based on the simulations for
the model priors and lets them decide, which model matches the
observed statistics.

We chose this method mainly since it does not increase its model
selection error when including many and potentially uninformative
statistics and it comes with a built-in measure for the ability of
each statistic to distinguish hypotheses, the variable importance. The
variable importance of $S_i$ measures the average decline in
misclassification over all nodes of the random forest where $S_i$ was
picked as decider.  To assess the misclassification errors, the ABC
method uses the out-of-bag error. For this, one takes the fraction of
trees in the random forest for each simulation \textit{sim} that were
built without \textit{sim} and that sorted \textit{sim} into a wrong
model class (and then averages over all simulations).

As summary statistics, we use the statistics as in
\cite{FF-Kato171060}, but we add several more statistics, for instance
nucleotide diversity $\pi$, the mean of the Hamming distances between
all pairs of sampled individuals. We also add a new
statistic\index{coalescent!Xi@$\Xi$-$n$-!clade size}.
For each $i\in[n]$, $O_n(i)$ is the number of individuals including
$i$ that share all non-private mutations of $i$ (a mutation is private
if it is only found in one sampled individual). This correponds to the
smallest allele count $>1$ of all mutations that affect individual
$i$. See Figure~\ref{FF-fig:on} for an example. We consider the
$(0.1,0.3,0.5,0.7,0.9)$-quantiles of $(O_n(i))_{i\in[n]}$ as well as
the mean, standard deviation and harmonic mean as statistics, this set
of statistics is denoted by $O$.
\begin{figure}[t]
\begin{center}
      \includegraphics[width=0.5\textwidth]{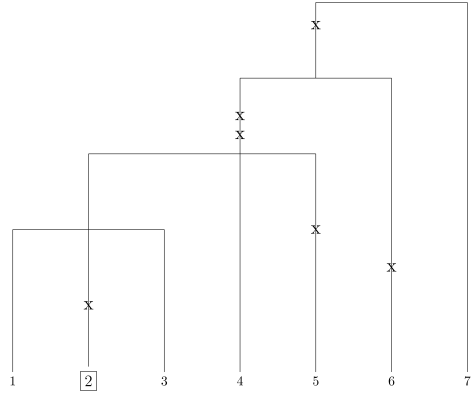}
\end{center}
\caption{\label{FF-fig:ocmc} Genealogical tree, mutations marked by
  \texttt{x}. Individual 2 is affected by 4 mutations with allele
  counts 1,5,5,6. All non-private mutations are shared by individuals
  $[5]$. The observable minimal clade size of 2 is $O_n(2)=5$.}
\label{FF-fig:on}
\end{figure}

Our first result is that, not very surprisingly, including more
statistics decreases the out-of-bag error meaningfully
across a range of different sets of $n$-coalescents.  See
Table~\ref{FF-tab:m12ABCres} for a comparison between model classes of
Beta-$100$-coalescents (class Beta) for $\alpha\in [1,2)$ and
Kingman's $n$-coalescent with exponential growth (class Growth) and
growth rate in $\rho\in[0,1000]$. We chose a uniform prior on
$\alpha$, but (essentially) a $\log$ uniform prior on the growth rate,
with an atom at $0$, to put a higher prior weight on low, more
realistic growth rates (however, some real data sets of fungal or
bacterial pathogens do fit relatively well to models with growth rates
of 500 or more).  The mutation rate $\theta$ was chosen for each model
in any model class so that it produces, on average, a number of
mutations $s\in\{15,20,30,40,60,75\}$, i.e. we draw $s$ from this set
according to a (uniform) prior distribution and then set $\theta$ to
the generalised Watterson estimate.  We performed 175,000 simulations
per model class.  From these, we conducted the described ABC
analysis. We perform the exact same analysis a second time, including
new simulations with parameters from a new (and independent) draw from
the prior distributions.

\begin{table}\begin{tabular}{llll}
Set of statistics & Misclassification for Beta & Misclassification for Growth\\ \hline
$AF$, $S$,$\pi$ & 0.333 (0.330) & 0.246 (0.246)\\ 
$AF$, $S$,$\pi$,$O$ &  0.246 (0.242) &  0.205 (0.209) \\
$AF$, $S$,$\pi$,$Ham$ & 0.277 (0.274) & 0.222 (0.227)\\ 
$AF$, $S$,$\pi$,$Phy$ & 0.317 (0.320) & 0.247 (0.244)\\ 
$AF$, $S$,$\pi$,$r^2$ & 0.298 (0.290) & 0.228 (0.235)\\ 
$AF$, $S$,$\pi$,$O$,$Ham$ &  0.241 (0.240) &  0.203 (0.205) \\
FULL - $O$ & 0.269 (0.271) & 0.217 (0.215) \\ 
FULL - $(AF,S,\pi)$ & 0.242 (0.240) & 0.208 (0.210) \\ 
FULL &  0.243 (0.240) &  0.200 (0.204) \\[2mm]
\end{tabular}
\caption{Misclassification errors (out-of-bag errors) of the random
  forest ABC model comparison (5,000 trees) with different sets of
  summary statistics. In parentheses: results for rerun of
  analysis. FULL: $AF$,$S$,$\pi$,$O$,$Ham$,$Phy$,$r^2$}
\label{FF-tab:m12ABCres}
\end{table}

A more interesting take-away from our analysis is that adding
statistics based on the minimal observable clade sizes $O$ to
the set based on the allele counts $AF$, $S$, $\pi$ leads to
considerably decreased error rates, which are only marginally reduced by
further additions of statistics. The results are essentially unchanged if the
full site frequency spectrum is used instead of $AF$. For the model
selection using all considered statistics, the variable importance was
highest for the harmonic mean of the minimal observable clade sizes.
A potential explanation why the harmonic mean of $(O_n(i))_{i\in[n]}$
is a meaningful statistic for such model selections can be found
in~\cite[p. 31-32]{FF-Freund2019a} and has to do with the connection
of $O_n(i)$ with the minimal clade size of $i$.  These findings remain 
true for other model selection problems, e.g. if one changes the
multiple merger genealogy model to a Dirac or
Beta-$\Xi$-$n$-coalescent.  The results presented here indicate that
focusing on the site frequency spectrum for model selection between
multiple merger and binary $n$-coalescents can be a suboptimal choice
and that combining the information with further statistics as also
done in~\cite{FF-Rice461517} is advisable.  A similar effect was
observed for model selection between different scenarios of population
size changes for bifurcating genealogies, see~\cite{FF-Jay2019}.  Due
to the recent advance of full-data methods for such related inference
problems, notably \cite{FF-Palacios19}, where past population sizes
are inferred based on a Kingman's $n$-coalescent with fluctuating
population sizes (essentially inferring the function $\nu$
from~\eqref{FF-eq:coaltimeNchange}) by an efficient representation of
the full data,
a revisit of full-likelihood methods may also be a viable alternative.


Where is this surplus of information for model selection really
critical?  For species with large enough genomes featuring many
chromosomes and/or linkage blocks, the two
studies~\cite{FF-koskela2018multi} and \cite{FF-koskela2018robust}
suggest that the information within the SFS is already enough to
reliably distinguish between model classes over a range of different
genealogy models (see the chapter of Birkner and Blath~\cite{FF-MBJB20}
in this volume). However, for e.g.\ bacteria with low recombination rates,
where the entire genome can be essentially seen as one linkage block,
this will not work as well. Here, pooling of more information than the
SFS/AF clearly helps with distinguishing between model classes.  An
example of such bacteria is \textit{Mycobacterium tuberculosis}, the
bacterial agent of human tuberculosis, which is haploid and
propagating clonally. Genealogies from \textit{M. tuberculosis}
outbreaks are usually modelled as Kingman's $n$-coalescent with
exponential growth.  However, the bacteria have to evolve quickly due
to a high selection pressure, which could lead to a
Bolthausen--Sznitman $n$-coalescent as the suitable genealogy model.
Moreover, real data shows some patients that are super-spreaders,
i.e. infecting very many other patients, which could be modelled as a
multiple-merger genealogy. With Menardo and Gagneux, I investigated
in~\cite{FF-Menardo2019} whether indeed
classes of $\Lambda$-$n$-coalescents fit better to the data than
Kingman's $n$-coalescent with exponential growth. We considered 11~publicly
available data sets and performed model selection using the
ABC approach described above. Our results show that 8 of 11 data sets
actually fit better to multiple merger genealogies (10 of 11 if one
allows for $\Lambda$-$n$-coalescents with exponential growth) and that
these produce patterns of genetic diversity compatible with the
observed data.

\section{Partition blocks and minimal observable clades}

Seeing that the minimal observable clade sizes\index{clade size} are a
reasonable addition to the arsenal of population genetic statistics,
Siri-J\'egousse and I also investigated their mathematical
properties \cite{FF-Freund2019b}\index{coalescent!Xi@$\Xi$-$n$-!clade size}.
In the following, the key findings are presented.  Looking
back at Figure~\ref{FF-fig:ocmc}, the minimal observable clade of
individual $i$ can be represented as the block including $i$ of the
$\Xi$-$n$-coalescent at the time of the first mutation affecting $i$
that is not on the branch connecting leaf $i$ to the rest of the
tree, i.e. on the external branch of $i$.  Due to exchangeability, the
distribution of the size of the minimal observable clade is identical
for all $i\in[n]$, so in the following we fix $i=1$ and omit the $i$
for ease of notation.  Let $O_n$ be the size of the minimal observable
clade of 1 and $E_n$ the length of the external branch of 1. Since
mutations are modelled by a homogeneous Poisson point process,
independent of the $n$-coalescent, on the branches of the coalescent
tree with rate $\frac{\theta}{2}$, the waiting time for the first
non-private mutation on the path of leaf 1 to the root of the tree is
$E_n+T_n$, where $T_n$ is a exponentially distributed with parameter
$\frac{\theta}{2}$ independent of $E_n$ and of the coalescent.  If
$E_n+T_n$ exceeds the height of the $n$-coalescent tree, there is no
non-private mutation of $i$ and thus $O_n=n$.  Recall that
$B^{(n)}_1(t)$ denotes the size of the block including 1 in the
partition $\Pi^{(n)}_t$ induced by the $\Xi$-$n$-coalescent. Then, we
can express
\begin{equation}\label{FF-eq:onasblock}
O^{}_n=B^{(n)}_1(E^{}_n+T^{}_n).
\end{equation}
Based on this equation, we considered $O_n$ for
$\Lambda$-$n$-coalescents. For finite $n$, all moments $E(O_n^j)$,
$j\in\NN$, can again be computed recursively,
see~\cite[Thm.~4.1]{FF-Freund2019b}.  Since the recursion for fixed
$n$ is rather involved, yet relatively straightforward, let us focus
on the asymptotics of $O_n$ for $n\to\infty$.  Generally, for any
$\Xi$-$n$-coalescent, $E_n$ decreases in $n$ and
$E_n\stackrel{d}{\to}Exp (\mu_{-1})$ for $n\to\infty$, where
$\mu_{-1}=\int_{\Delta} \sum_{i=1}^{\infty}x_i
\frac{\Xi(\dd x)}{(x,x)}$.  Thus, for
$\Lambda$-$n$-coalescents without dust, $\mu_{-1}=\infty$ and
$E_n\to 0$ almost surely. In this case, $O_n$ is asymptotically 
$B^{(n)}(T')$, where $T'\stackrel{d}{=}Exp (\frac{\theta}{2})$, since
this is the time until the first mutation in the coalescent
tree on its path from leaf to root (which does not change with $n$,
since the $n$-coalescents are nested, branches are added if
$n$ grows).  $T'$ is independent from all $\Lambda$-$n$-coalescents.
Kingman's correspondence ensures that the asymptotic frequency
$\lim_{n\to\infty}B^{(n)}_1(T')/n=f_1(T')$ exists almost
surely, where $f_1(t)$ is the asymptotic frequency of the block
including 1 at time $t$.  This ensures $n^{-1}O_n\to f_1(T')$
 almost surely. When we additionally plug in the
representation of the moments of $f_1(t)$ which follows
from~\cite[Eq. (50) + Prop. 29]{FF-Pitman1999}, we can fully
characterise the distribution of $f_1(T')$ by its moments
\begin{equation}\label{FF-eq:onlimnodust}
  E\big[\big( f^{}_1(T')\big)^k\,\big]=1-\sum_{r=2}^{k+1} a^{}_{k+1,r}
  \frac{\frac{\theta}{2}}{\lambda^{}_r+\frac{\theta}{2}}, 
\end{equation}  
where $\lambda_r$ is the total rate of the $\Lambda$-coalescent in a
state with $r$ blocks and $a_{k,r}$ is a rational function of
$\lambda_2,\ldots,\lambda_k$, defined as
in~\cite[Prop. 29]{FF-Pitman1999}.  For the
Bolthausen--Sznitman-$n$-coalescent\index{coalescent!Bolthausen--Sznitman!$n$-} 
($\Lambda=Beta (1,1)$), we can show that the
distribution of $f_1(T')$ is a Beta distribution.  For any
$\Lambda$-$n$-coalescent, $f_1$ is a pure jump process and for
$\Lambda=Beta (1,1)$, the countably many jumps are happening at
i.i.d. $Exp(1)$ distributed times and their heights $(J_i)_{i\in\NN}$
are governed by a Poisson--Dirichlet distribution with parameters
$(0,1)$, see~\cite[Cor.~16, Prop.~30]{FF-Pitman1999}.  Thus, each jump
contributes to $f_1(T')$ if its jump time is smaller than $T'$, which
happens with probability $\left(1+\frac{\theta}{2}\right)^{-1}$.  This
shows that $f_1(T')\stackrel{d}{=}\sum_{i\in\NN}B_iJ_i$, where
$(B_i)_{i\in\NN}$ are i.i.d. Bernoulli random variables with success
probability $\left(1+\frac{\theta}{2}\right)^{-1}$, and this sum has a
$Beta\big(\left(1+\frac{\theta}{2}\right)^{-1},
\frac{\theta}{2}\left(1+\frac{\theta}{2}\right)^{-1}\big)$-distribution,
which can be seen from the construction of the Poisson--Dirichlet
distribution via a Poisson point process, as
e.g. in~\cite[Sect. 4.11]{FF-ABT}.

If the $\Lambda$-$n$-coalescent has dust, the asymptotic decoupling of
$f_1$ and the time until the first mutation on the path from
individual 1 to the root that is not on the external branch breaks
down. However, if a $\Xi$-coalescent has dust and stays infinite, the
block frequency $f_1$ can be characterised from jump to jump, which I
showed in joint work with M\"ohle.\index{coalescent!Xi@$\Xi$!asymptotic frequencies}
\begin{proposition}\label{FF-prop:f1dust}\cite[Thm. 1]{FF-freund2017size}
  In any $\Xi$-coalescent with dust that stays infinite,
  $(f_1(t))_{t\geq 0}$ is an increasing pure jump process with
  c\`adl\`ag paths, $f_1(0)=0$ and $\lim_{t\to\infty}f_1(t)=1$, but
  $f_1(t)<1$ for $t>0$ almost surely. The waiting times between the
  almost surely infinitely many jumps are distributed as independent
  ${\rm Exp}(\mu_{-1})$ random variables. Its jump chain
  $(f_1[k])_{k\in\NN}$ can be expressed via stick-breaking
\begin{equation}\label{FF-eq:stickbreak}
f^{}_1[k]=\sum_{i=1}^{k}X^{}_i\prod_{j=1}^{i-1}(1-X^{}_j),
\end{equation}
where the $(X_j)_{j\in\NN}$ are pairwise uncorrelated, $X_j>0$ almost
surely and $E(X_j)=\gamma:=\frac{\Xi(\Delta)}{\mu_{-1}}$ for all
$j\in\NN$. Moreover, $E(f_1[k])=1-(1-\gamma)^k$.  In general, the 
$(X_j)_{j\in\NN}$ are neither independent nor identically distributed.
\end{proposition}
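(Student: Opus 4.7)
The plan is to work directly with the paintbox/Poisson construction of the $\Xi$-coalescent and to apply Kingman's correspondence to the complement of block $1$ at each jump. First I would handle the jump structure: at a Poisson point with paintbox $x$, the block containing $1$ participates in a merger iff it draws a non-zero colour, which happens with probability $\sum_i x_i$. The marking theorem applied to the intensity $\dd t\otimes\Xi(\dd x)/(x,x)$ then shows that the jump times of block $1$ form a Poisson process of rate $\int_\Delta\sum_i x_i\,\Xi(\dd x)/(x,x)=\mu_{-1}$. The dust assumption gives $\mu_{-1}<\infty$ and $\Xi(\Delta)>0$ gives $\mu_{-1}>0$, so there are almost surely infinitely many jumps with i.i.d.\ $\mathrm{Exp}(\mu_{-1})$ waiting times. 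Between consecutive jumps the set of indices making up block $1$ is unchanged, so its asymptotic frequency is constant, whence $f_1$ is monotone, pure-jump and c\`adl\`ag with $f_1(0)=0$ (block $1$ starts as the singleton $\{1\}$).

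Next I would derive the stick-breaking recursion. At the $k$-th jump, condition on the paintbox $x$ at that jump and on block $1$'s colour $J_k$, with $P(J_k=j\mid x)=x_j/\sum_i x_i$. Decompose $1-f_1[k-1]=\sum_i q_i+\delta$ into the macroscopic non-block-$1$ frequencies $q_i$ and the dust fraction $\delta$. Each macroscopic block picks colour $J_k$ independently with probability $x_{J_k}$, while by the strong law applied to the independently coloured dust, exactly a fraction $x_{J_k}$ of $\delta$ is absorbed. Hence the merged mass equals $\sum_i q_i\one_{\{C_i=J_k\}}+\delta\,x_{J_k}$; dividing by $1-f_1[k-1]$ defines $X_k\in[0,1]$, gives $f_1[k]=f_1[k-1]+X_k(1-f_1[k-1])$, and iteration yields \eqref{FF-eq:stickbreak}.

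The central computation is $E[X_k\mid\mathcal{F}_{k-1}]$, where $\mathcal{F}_{k-1}$ is the history up to the $(k{-}1)$-st jump. Taking conditional expectations first over the macroscopic colourings, then over $J_k$, and finally over the size-biased paintbox law at a jump, all dependence on $\mathcal{F}_{k-1}$ cancels and
\[
E[X_k\mid\mathcal{F}_{k-1}]=\frac{1}{\mu_{-1}}\int_\Delta\sum_J\frac{x_J}{\sum_i x_i}\,x_J\,\sum_i x_i\,\frac{\Xi(\dd x)}{(x,x)}=\frac{\Xi(\Delta)}{\mu_{-1}}=\gamma.
\]
Pairwise uncorrelatedness of the $X_j$ is then immediate by the tower property: for $i<j$, $E[X_iX_j]=E[X_i\,E[X_j\mid\mathcal{F}_{j-1}]]=\gamma\,E[X_i]=\gamma^2$. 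The identity $1-f_1[k]=(1-f_1[k-1])(1-X_k)$ together with $E[1-X_k\mid\mathcal{F}_{k-1}]=1-\gamma$ then iterates to $E[f_1[k]]=1-(1-\gamma)^k$. Here $0<\gamma<1$, since $\gamma=1$ would force $E[X_1]=1$, hence $X_1=1$ a.s., which means $\Xi$ is supported on paintboxes of the form $(1,0,\ldots)$ and at the first Poisson point every block is merged, contradicting that the coalescent stays infinite. Thus $E[1-f_1[k]]=(1-\gamma)^k\to 0$, and monotonicity of $f_1$ promotes this $L^1$ convergence to almost sure $f_1(t)\to 1$. Finally, $f_1(t)<1$ a.s.\ for every $t>0$ because each individual's first merger occurs after an $\mathrm{Exp}(\mu_{-1})$ time, so the dust at time $t$ has asymptotic frequency $e^{-\mu_{-1}t}>0$ and lies outside block $1$.

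The main obstacle I expect will be the rigorous pointwise-in-time Kingman correspondence applied to the complement of block $1$, and in particular the clean splitting of the merged mass into a genuinely random macroscopic part and a deterministic-by-LLN dust part, together with the measurability bookkeeping needed when conditioning on $\mathcal{F}_{k-1}$. This is precisely where the dust hypothesis $\mu_{-1}<\infty$ is used essentially: it is what makes $E[X_k\mid\mathcal{F}_{k-1}]$ depend neither on the macroscopic state nor on $k$, and thereby what gives the stick-breaking weights a common mean while in general leaving them neither independent nor identically distributed.
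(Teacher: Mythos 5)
Your proposal is correct and follows essentially the same route as the paper's own (sketched) argument: the marked/thinned Poisson point process giving $\mathrm{Exp}(\mu_{-1})$ waiting times for the jumps of block $1$, the law of large numbers for the independently coloured dust, and the identical size-biased integral yielding $E(X_k\mid\mathcal{F}_{k-1})=\Xi(\Delta)/\mu_{-1}$ independently of the state. The extra pieces you supply (pairwise uncorrelatedness via the tower property, $0<\gamma<1$, and the positivity of the non-block-$1$ dust frequency giving $f_1(t)<1$) are exactly the standard completions the paper's condensed sketch omits.
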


\begin{proof}
  Here, only a sketch of the proof is excerpted
  from~\cite{FF-freund2017size}, focusing on the waiting times between
  jumps and their expected height.  The expected height is given in
  terms of the mass added to the asymptotic frequency $f_1(t-)$ if the
  $k$th jump is at time $t$, and thus has the form $X_k(1-f_1(t-))$.
  First, observe that if the $\Xi$-coalescent has dust,
  $\Xi(\{(0,0,\ldots)\})=0$, which means that any $\Xi$-$n$-coalescent
  can be constructed by colouring according to a Poisson point process
  on $[0,\infty)\times \Delta$ with intensity
  $\dd t\otimes \frac{\Xi(\dd x)}{(x,x)}$ without having to
  add additional mergers of two blocks.  Recall the Poisson point
  construction\index{paintbox construction} of a $\Xi$-$n$-coalescent
  from Section~\ref{FF-sect:intro}, especially how mergers are
  determined by throwing independent balls on a `paintbox'
  $x\in\Delta$ and merging blocks whose balls have the same colour
  (landed in the same compartment of $x$).  As also mentioned in the
  first section, if the $\Xi$-coalescent has dust, there are almost
  surely only finitely many Poisson points $(t,x)$ where the block
  containing $i$ is coloured. More precisely, each Poisson point is
  affecting the block containing 1 with probability
  $\sum_{i\in\NN}x_i$.  Dividing the points of the original Poisson
  point into those that colour the block including 1 leads to a marked
  Poisson process, so the Poisson points affecting the block including~1 
  form again a Poisson point process on $[0,\infty)\times\Delta$
  with intensity measure
  $\nu_1=\dd t\otimes\sum_{i\in\NN}x_i\frac{\Xi(\dd x)}{(x,x)}$.
  Since the coalescent stays infinite, every such point, due to the
  strong law of large numbers for exchangeable random variables, will
  colour infinitely many blocks with this colour.  For $f_1$, we can
  thus describe its jump by just merging the blocks of the
  $\Xi$-coalescent according to the colouring at each Poisson point
  affecting the block including 1 (though the Poisson points not
  affecting 1 do change the frequencies of these blocks). The
  probability for not having a Poisson point affecting 1 with time
  component $t$ in any interval of length $t_0$ is given by
  $\exp\big( -\nu_1([t',t'+t_0)\times \Delta)\big)=\exp(-t_0\mu_{-1})$,
  which verifies the distribution of the waiting times.  At such a Poisson
  point $(t,x)$, any other block merges with the block including~1 if
  it has the same colour $i$ (which the block including~1 picks with probability $\frac{x_i}{\sum_{i\in\NN}x_i}$), so with probability $x_i$,
  regardless of its frequency. Each $x$ is drawn from the probability
  distribution
  $(\mu_{-1})^{-1}\sum_{i\in\NN}x_i\frac{\Xi(\dd x)}{(x,x)}$.
  All blocks not containing~1 have total frequency $1-f_1(t-)$ when
  potentially merging at $(t,x)$, so the average fraction of mass from
  $1-f_1(t-)$ added to $f_1(t-)$
  is
  $$
  E(X^{}_k)= \frac{1}{\mu^{}_{-1}} \int_{\Delta}\sum_{i\in\NN}
  \frac{x_i^2}{\sum_{k\in\NN}x^{}_k}
    \sum_{\ell\in\NN} x^{}_\ell \frac{\Xi(\dd x)}{(x,x)} 
  =\frac{\Xi(\Delta)}{\mu^{}_{-1}}.
  $$
\end{proof}     
This result can now be applied to address the asymptotics of $O_n$ for
$\Lambda$-$n$-coalescents with dust if the coalescent stays infinite,
i.e. $\Lambda(\{1\})=0$. Let $T_1,T_2,\ldots$ be the i.i.d. jump times
of $f_1$. Clearly, $E_n=T_1$ for $n$ large enough.  Similarly as in
the case without dust, we can then see that there is a time $T'$, not
dependent on $n$, when the first mutation after time $T_1$ appears on
the path from leaf 1 to the root of the $\Xi$-$n$-coalescent, and
this, for $n$ large enough, will fall in between two jumps of $f_1$,
say $K$ and $K+1$.  Thus, Proposition~\ref{FF-prop:f1dust} ensures
that $\lim_{n\to\infty}n^{-1}O_n=f_1[K]$ exists almost surely. $K$ is
geometrically distributed on $\NN$ with success probability
$\frac{\mu_{-1}}{\mu_{-1}+\frac{\theta}{2}}$, since the exponential
distribution is memoryless and the probability that one exponential
random variable is smaller than another one independent of it is given
by the exponential rate of the first divided by the sum of the rates.
With Proposition~\ref{FF-prop:f1dust}, one can compute
$E(f_1[K])=1-\frac{\theta}{2\mu_{-1}}\frac{a}{1-a}$ with
$a=\left(1-\frac{\Lambda([0,1])}{\mu_{-1}}\right)
\left(\frac{\frac{\theta}{2}}{\frac{\theta}{2}+\mu_{-1}}\right)$.

The process $(f_1(t))_{t\geq 0}$ has some interesting further
properties. While it is Markovian when observed in the
Bolthausen--Sznitman $n$-coalescent,
see~\cite[Cor. 16]{FF-Pitman1999}, it is not in general. For instance,
by further exploiting the Poisson construction, the following
properties of $f_1$ for Dirac coalescents can be
derived\index{coalescent!Xi@$\Xi$!asymptotic frequencies}\index{coalescent!Dirac}.

\begin{proposition}\cite[essentially Prop. 2]{FF-freund2017size}
  Let $\Lambda=\delta_p$, $p\in[\frac{1}{2},1)$ or
  $p\in(0,\frac{1}{2})$ and transcendental, and $q:=1-p$. $f_1$ takes
  values in the set
\[
  \cM^{}_p:=\Big\{\sum_{i\in\NN} b^{}_i pq^{i-1} : b^{}_i\in\{0,1\},
  \sum_{i\in\NN} b^{}_i < \infty\Big\} .
\]
For $x=\sum_{i\in\NN}b_i pq^{i-1}\in\mathcal{M}_p$, we have
$$
P(f^{}_1[1]=x)=pq^{j-1} \prod_{i\in J\setminus \{j\}}P(Y+i\in J)
\prod_{i\in [j-1]\setminus J} P(Y+i\notin J)>0,
$$
where $Y\stackrel{d}{=}Geo (p)$, $J:=\{i\in\NN\mid b_i=1\}$ and $j:=\max J$.
The process $f_1$ is not Markovian
whereas its jump chain $(f_1[k])_{k\in\NN}$ is  Markovian.
\end{proposition}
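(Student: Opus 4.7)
The plan is to exploit the specific form of the Poisson construction when $\Xi$ is concentrated at $(p,0,0,\ldots)$: at each Poisson point every existing block independently picks the sole colour with probability $p$, stays colourless with probability $q=1-p$, and all colour-picking blocks are fused. I would organise the proof around a ``blob'' decomposition of the dust and around the jump description of $f_1$ from Proposition~\ref{FF-prop:f1dust}.

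\textbf{Value set and unique representation.} Enumerate the Poisson points in their natural order $1,2,\ldots$, and let $W_k$ denote the mass recruited from the dust at event $k$. By induction on $k$, just before event $k$ the dust has asymptotic frequency $q^{k-1}$ while every non-dust block is of the form $\sum_{i\in S}pq^{i-1}$ with finite $S\subseteq[k-1]$; at event $k$ a fresh mass $pq^{k-1}$ is recruited, absorbing every non-dust block that happened to pick the colour. Kingman's correspondence places $f_1(t)$ in $\cM_p$. Uniqueness of the representation is needed only to make $J$ (and $j=\max J$) well defined from $x$: for $p\geq\tfrac12$ one has $pq^{i-1}\geq q^i=\sum_{i'>i}pq^{i'-1}$, forcing a greedy base-$q$-style decomposition, while for transcendental $p$ any two distinct representations would yield a nontrivial polynomial identity in $q$.

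\textbf{The jump formula (main technical step).} Condition on block~$1$ first being coloured at event $j$; since block~$1$ is exposed to an i.i.d.\ $\mathrm{Bern}(p)$ coin at every event, this carries probability $q^{j-1}p=pq^{j-1}$. The remaining task is to show that, conditionally, the joint event ``the blob $W_i$ ends up in block~$1$ iff $i\in J$'' factorises as the stated product over $i\in[j-1]$. The main obstacle is that the indicators $Z_i:=\mathbf{1}_{\{W_i\in\text{block 1 at event }j\}}$ are \emph{not} independent: two blobs already fused into a cluster share every later colour-pick. The way around this, hinted at by the product form, is a sequential coupling in which each blob $W_i$ carries its own independent sequence of $\mathrm{Bern}(p)$ coins with $Y_i\sim\mathrm{Geo}(p)$ the index of its first success, and at each Poisson event only the coin of the minimum-index blob of each currently affected cluster is consulted (the larger-index coins being discarded after merger). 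Tracking which ``first success'' events feed blobs into block~$1$ at event $j$ under this convention yields exactly that blob $i$ lands in block~$1$ iff $Y_i+i\in J$, with independence across $i$ restored by the leader-only rule; this produces the displayed product, and strict positivity is immediate since every factor is positive.

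\textbf{Markov properties.} For the jump chain $(f_1[k])_{k\in\NN}$: by unique representation, $f_1[k]$ determines the blob-index set $J_k$ of block~$1$ and in particular $j_k=\max J_k$, which is the event number of the latest $f_1$-jump. By exchangeability and the Poisson structure, conditional on the history of $f_1$ the other-blob cluster structure at time of the $k$-th jump has a distribution depending on the past only through $J_k$ (hence through $f_1[k]$), since non-$f_1$-affecting events interleave stationarily with $\mathrm{Exp}(\mu_{-1})$ waits; the distribution of the next increment is therefore a function of $f_1[k]$ alone. The continuous-time process $(f_1(t))_{t\geq 0}$ is non-Markovian because between jumps $f_1$ is constant while the underlying Poisson point process keeps producing events that reshape the hidden other-blob partition: the full path $(f_1(u))_{u\leq t}$ reveals the last jump time and thus pins down the distribution of the next increment, whereas knowing only $f_1(t)$ leaves the ``event-count age'' unknown. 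Exhibiting two histories compatible with the same $f_1(t)$ at a time strictly between jumps but yielding distinct conditional laws for $f_1(t+s)$ then suffices.
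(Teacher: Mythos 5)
Your overall architecture matches the intended argument (the paper itself prints only the one-paragraph heuristic following the statement and defers the proof to the cited reference, so your proposal is considerably more detailed than what appears here): decompose the non-dust mass into blobs $W_i=pq^{i-1}$ recruited at the $i$-th Poisson event, get uniqueness of the $\cM_p$-representation from the greedy/transcendence dichotomy (for $p=\tfrac12$ one has equality $pq^{i-1}=q^i$, but finiteness of $\sum_i b_i$ still forces uniqueness), condition on individual $1$ first being coloured at event $j$ (probability $pq^{j-1}$, independent of the blobs' fates since distinct blocks colour independently), and reduce the event $\{f_1[1]=x\}$ to the backward recursion ``blob $i$ ends up in the event-$j$ block iff $i+Y_i\in J$'' with independent $\mathrm{Geo}(p)$ variables $Y_i$. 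The Markov/non-Markov discussion is also right in outline; for the jump chain the clean reason is that conditioning on the whole history $(J_1,\dots,J_k)$ constrains the coin of a blob $i\notin J_k$ only through the event $\{i+Y_i\notin J_k\}$, because the $J_l$ are nested with union $J_k$.

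The one genuine flaw sits exactly at the step meant to restore independence: your leader convention is backwards. You consult, at each Poisson event, the coin of the \emph{minimum}-index blob of each cluster and discard the larger-index coins after a merger. Under that rule a blob that is born into a cluster already containing an older blob never has its coins read at all: if the cluster containing $W_1$ is absorbed at event $2$ into the cluster formed there, the merged cluster keeps consulting $W_1$'s coins, so $Y_2$ is a free random variable with no bearing on the dynamics and the identity ``$W_2$ lands in the event-$j$ block iff $2+Y_2\in J$'' fails. The correct convention is the opposite one: the cluster formed at event $i$ uses the fresh coins of its \emph{newest} (maximum-index) blob $W_i$ at events $i+1,i+2,\dots$ until its first success at event $i+Y_i$, whereupon it is absorbed and hands the lead to $W_{i+Y_i}$. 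Then every $Y_i$ equals the actual waiting time for the cluster born at event $i$ to merge again, the consulted coin sets of distinct blobs are disjoint (hence the $Y_i$ are mutually independent and independent of individual $1$'s own colouring times), and backward induction from $j$ gives $\{f_1[1]=x\}=\bigcap_{i\in J\setminus\{j\}}\{i+Y_i\in J\}\cap\bigcap_{i\in[j-1]\setminus J}\{i+Y_i\notin J\}$, which factorises into the displayed product. With ``minimum'' replaced by ``most recently recruited'', your proof goes through.
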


Under the condition for $p$ from the proposition, knowing that
$f_1[1]=x$ allows to directly infer information about mergers at or
before the time the minimal clade is formed: Each collision of a Dirac
coalescent (on $\mathbb{N}$) merges a fraction of $p$ of all singleton
blocks present (the `dust'), so each each block has asymptotic
frequency from $\mathcal{M}_p$.  The condition on $p$ ensures that
$x\in\mathcal{M}_p$ has a unique representation in terms of the
$b_i$'s.  Moreover, the $b_i$'s encode at which Poisson point the
minimal clade appears and which fraction of singletons merged at
earlier Poisson points are merged to form the minimal clade.  If one
drops the condition on $p$, the distribution of $f_1[1]$ would become
more involved, since one would have to trace back which combinations
of $b_i$'s lead to the same $x$ and sum these.

\section*{Acknowledgements}
I thank Anton Wakolbinger, Ellen Baake and two anonymous referees for helpful comments and suggestions that improved the quality of this review.

\end{document}